\newtheorem{thm}{Theorem}[section]
\newtheorem{prop}[thm]{Proposition}
\newtheorem{lem}[thm]{Lemma}
\newtheorem{cor}[thm]{Corollary}
\theoremstyle{remark}
\theoremstyle{definition}
\newtheorem{defn}[thm]{Definition}
\newcommand*{\rom}[1]{\expandafter\@slowromancap\romannumeral #1@} 
\renewcommand{\phi}{\varphi} 
\newcommand{\E}{\mathrm{E}} 
\newcommand{\nooutput}[1]{}
\newcommand{\sign}{\mathrm{sign}}
\DeclareMathOperator\vol{vol} 
\begin{document}
	
	\title{Explicit Local density bounds for It\^o-processes with irregular drift.}
	\date{\today}
	
	
	\author[Kr\"uhner]{Paul Kr\"uhner}
	\address[Paul Kr\"uhner]{\\
		FAM - financial and actuarial mathematics \\
		Technical University Vienna\\
		Wiedner Hauptstrasse 8-10\\
		1040 Vienna, Austria}
	\email[]{paul.kruehner@fam.tuwien.ac.at}
	
	\author[Xu]{Shijie Xu}
	\address[Shijie Xu]{Institute for Financial and Actuarial Mathematics, University of Liverpool}
	\email[]{ShijieXu@liverpool.ac.uk}

	\keywords{Pathwise SDEs, smooth bounds, irregular drift.}
	\subjclass[2010]{60H10, 49N60}
	
	\thanks{Financial support by the Austrian Science Fund (FWF) under grant P28661 is 
		gratefully acknowledged.} 
	
	\begin{abstract}
		We find explicit upper bounds for the density of marginals of continuous
		diffusions where we assume that the diffusion coefficient is constant
		and the drift is solely assumed to be progressively measurable and
		locally bounded. In one dimension we extend our result to the case that
		the diffusion coefficient is a locally Lipschitz-continuous function of
		the state. Our approach is based on a comparison to a suitable doubly
		reflected Brownian motion whose density is known in a series representation.
		

	\end{abstract}
	
	\maketitle
	
	\section{Introduction}
	The analysis of stochastic differential equations (SDEs) and in particular the analysis 
	of the law of their solutions has been a research topic of great interest. For SDEs 
	driven by a $d$-dimensional Brownian motion, one is interested in conditions imposed on 
	the coefficients of the SDE ensuring that the density of its solution is absolutely 
	continuous with respect to the Lebesgue measure at any time. In addition, one strives for 
	explicit bounds for Lebesgue density.\\
	\indent In order to derive these kinds of results, one usually imposes regularity 
	assumptions on the coefficients of the SDE. It appears that there are three different 
	approaches
	that are used to derive 
	properties of the density of the SDE's solution from regularity 
	assumptions imposed on the SDE's coefficients. The probably most noted approach is 
	Malliavin calculus. Malliavin \cite{malliavin.78} himself has provided smoothness and 
	non-degeneracy conditions on the coefficients of the SDE implying the existence as well 
	as smoothness and boundedness properties of the solution's density. Another approach that 
	is based on a stochastic calculus of variations has been proposed by Bouleau \& Hirsch 
	\cite{bouleau.hirsch.86}. There, the authors have used Dirichlet forms and a 
	limit procedure to derive absolute continuity of the finite-dimensional laws of solutions 
	to SDEs. Instead of using variational calculus, e.g.\ Malliavin calculus, some authors exploit techniques from control theory. Ba\~nos \& Kr\"uhner \cite{banos2016optimal} identify a worst-case scenario SDE among a family of SDEs. More precisely, they have proven that the density of this worst-case SDE, which is well known in the literature, dominates the densities of the other SDE-solutions. As a result, Ba\~nos \& Kr\"uhner derive optimal density bounds for the densities of solutions to SDEs with \emph{general progressively measurable, bounded} drift coefficient. Besides density estimation, some author also investigate the regularity of the density.  Hayashi, Kohatsu-Higa and Y{\^u}ki \cite{hayashi2013local} prove that SDEs with bounded H\"older-continuous drift and smooth elliptic diffusion coefficients admit H\"older-continuous densities at any time. 
	
    All the results mentioned above provide \emph{global bounds} to the density of the SDE's solution. Qian \& Xu \cite{qian2018optimal} has a description of the optimal local bound for the density under some local boundedness condition on the drift for Markovian processes. To be exact, the authors investigate Markovian diffusion processes on the domain with normal inward reflection, bounded drift and constant diffusion coefficient. They find the description of the optimal density bound which in the end for any point they identify the corresponding drift coefficient which produces a process that attains the bound in this point. 
     De Marco \cite{de2011smoothness} addresses more general SDE's and provides \emph{local existence results}. The author shows local smoothness of densities on an open domain under the usual condition of ellipticity and that the coefficients are smooth on such domain. 

	\indent We find a local upper bound which is not the sharpest as in \cite[Theorem 1]{qian2018optimal} but it is an explicit upper bound, cf.\ Proposition \ref{p:main inequality} below. Also, compared to \cite{qian2018optimal}, our process is non-Markovian and has no restriction on its domain. We only require local-Lipschitz for the diffusion coefficient which is weaker than the regularity condition used in \cite{hayashi2013local}. The authors also provide the smoothness of the density. Compared to De Marco's result \cite{de2011smoothness}, our estimated upper bound is explicit and we have no regularity requirement on the drift coefficient, while in multi-dimension cases our diffusion coefficient should be constant and we obtain no regularity of the density.   Our goal is to provide upper bounds for the density of the solution to an SDE on a local boundedness assumption of the drift coefficient. In order to establish our main result, we provide a link between the density of an It\^o-process and the transition density of a doubly reflected Brownian motion. Since the latter is known in closed form, we can exploit this representation and derive various local bounds for the density of an It\^o-process with a constant diffusion coefficient. These bounds only depend on the local behaviour of the drift coefficient. This approach allows us to reproduce the main result of Ba\~nos \& Kr\"uhner \cite{banos2016optimal}. In particular, an application of the It\^o-Tanaka formula allows us to extend the main result to It\^o-processes with a Lipschitz-continuous diffusion coefficient.

	\indent The paper is structured as follows. In Section \ref{section__main_results}, we present our 
	main result, Theorem \ref{t:main result}, as well as several corollaries. Section \ref{sec__proofs} provides proof of the main 
	theorem as well as an explicit representation of the transition density for a doubly reflected Brownian 
	motion. The latter enables us to derive the density bounds. Finally, we gather all the auxiliary and technical results in Appendix \ref{sec__Normal_expectations_and_estimates}.

	\subsection{Notations}
	Throughout the paper $W$ 
	is a standard $(\mathcal F_t)_{t\geq 0}$-Brownian motion on the 
	stochastic basis $(\Omega,\mathcal A,(\mathcal F_t)_{t\geq 0},P)$ where we assume that $(\mathcal F_t)_{t\geq 0}$ is right-continuous and complete. Besides, $\mathbb R_{+}:=[0,\infty)$ and for all $x\in\mathbb R^d$ we 
	denote the uniform norm by $\|x\|:=\max\{|x_j|:j=1,\dots,d\}$, the Euclidean norm by $|x|:=({\sum_{j=1}^d x_j^2})^{1/2}$, and the generalised signum function by $\sign(x):=1_{\{x\neq 0\}}x/\vert x\vert$. We denote by $e_j$ the $j$-th standard basis vector in $\mathbb R^d$, i.e.\ $j=1,\dots, d$ and $e_j(k)=1_{\{j=k\}}$. Furthermore, $\Phi$ (resp.\ $\phi$) denotes the distribution (resp.\ density) function of the standard normal law. For a Borel set $B\subseteq\mathbb R^d$ we denote by $\vol(B)$ the Lebesgue measure of $B$. We denote the open Euclidean ball with radius $\epsilon>0$ around some point $y\in\mathbb R^d$ by $B_{\epsilon}(y)$ and the open $\|\cdot\|$-ball by $B_{\epsilon,\infty}(y):=\times_{j=1}^d(y_j-\epsilon,y_j+\epsilon)$. Further notations are used as in \cite{jacod2013limit}.
	
	\section{Main statement and consequences}\label{section__main_results}
	The goal of this paper is to show that It\^o-processes with locally bounded drift and constant diffusion coefficient have locally bounded density at any time point $t>0$. In order to state this theorem we start to make the local boundedness precise.
	
	\begin{defn}
		Let $X$ be a $d$-dimensional It\^o-process, i.e.\ there is an $\mathbb R^d$-valued, progressively measurable process $\beta$ with locally integrable paths and an $\mathbb R^{d\times n}$-valued progressively measurable process $\sigma$ with locally square integrable paths such that
		$$X(t) = x_0 + \int_0^t\beta(s) ds + \int_0^t \sigma(s) dW(s),\quad t\geq 0$$
		where $W$ is an $n$-dimensional standard Brownian motion and $x_0\in\mathbb R^d$. 
		
		We say that the \textit{$X$ has bounded drift while $X$ is in some open set $U\subseteq\mathbb R^d$} if there is a constant $C>0$ such that
		$$ ||\beta_t|| 1_{\{X(t)\in U\}} \leq C $$
		for any $t\geq 0$, $P$-a.s.
		
		We say that $X$ has \textit{locally bounded drift} if $X$ has bounded drift on any bounded open set.
		
		We say that $X$ has \textit{non-degenerate diffusion coefficient} if $\sigma_t\sigma^{\top}_t$ is positive definite for any $t\geq 0$.
	\end{defn}
	
	As can be seen immediately from the definition, the bound $C$ is only effective as long as the process stays inside the set $U$. In the special case that $\beta(t) = b(X(t))$ for some locally bounded measurable function $b:\mathbb R^d\rightarrow \mathbb R^d$ we see that $X$ has locally bounded drift.
	
	\begin{thm}\label{t:main result}
		Let $X$ be a $d$-dimensional It\^o-process with constant, deterministic and non-degenerate diffusion coefficient. Assume that $X$ has bounded drift while $X$ is in some open set $U\subseteq\mathbb R^d$. Let $t>0$.
		
		Then 
		$$\rho_t(x) := \limsup_{\epsilon\rightarrow 0}\frac{P(||X(t)-x||<\epsilon) }{\vol(B_{\epsilon,\infty}(0))}, \quad x\in U$$ 
		is locally bounded.
		
		Moreover, $\rho_t$ is a version of the density of $X(t)$ on $U$, i.e.\ $P(X(t) \in A) = \int_A \rho_t(x)dx$ for any Borel-set $A\subseteq U$.
		
		In particular, if $X$ has locally bounded drift, then $X(t)$ has a locally bounded version of its density.
	\end{thm}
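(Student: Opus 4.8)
The plan is to establish the quantitative estimate underlying the statement --- that $P(X(t)\in\cdot\,)$ has, locally on $U$, a bounded density --- via a comparison with a doubly reflected Brownian motion, and then to extract all three assertions from it. Write $X(t)=x_0+\int_0^t\beta(s)\,ds+\sigma W(t)$, put $A:=\sigma\sigma^{\top}$ (a fixed positive definite $d\times d$ matrix) and fix $t>0$. For a point $x\in U$, passing to $\tilde X:=A^{-1/2}(X-x)$ produces an It\^o process whose diffusion coefficient $\tilde\sigma:=A^{-1/2}\sigma$ satisfies $\tilde\sigma\tilde\sigma^{\top}=I_d$ --- so its $d$ driving Brownian components are independent --- whose drift is still uniformly bounded, by some $C'$, while $\tilde X$ lies in the open set $A^{-1/2}(U-x)\ni 0$, and a Borel set $B\subseteq U$ near $x$ corresponds to the Borel set $A^{-1/2}(B-x)$ near $0$ of Lebesgue measure $|\det A|^{-1/2}\vol(B)$. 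Thus it suffices to treat the normalised process near the origin, and from here I write $X,\beta,U,C$ for the normalised objects.

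\emph{The comparison.} As $U$ is open, fix $r>0$ with $\bar Q:=\overline{B_{r,\infty}(0)}\subseteq U$; whenever $X\in\bar Q\subseteq U$ one has $\|\beta\|\le C$. The core is a \emph{link} between the law of $X(t)$ and the transition density of the doubly reflected Brownian motion $Y$ on $\bar Q$ that is reflected at every face of $\bar Q$ and carries, in the interior, the natural worst-case drift driving it towards $0$ at the maximal rate $C$ (coordinatewise $-C\,\sgn(\cdot)$). Since the normalised Brownian components are independent and this drift is diagonal, $Y$ is a product of $d$ one-dimensional doubly reflected Brownian motions with constant restoring drift on $[-r,r]$, whose transition density $p^{(1)}_t$ is classical and given by a convergent Sturm--Liouville series, so $p^Y_t(y,z)=\prod_{j=1}^{d}p^{(1)}_t(y_j,z_j)$ is explicit. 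The link --- this is the content of Proposition \ref{p:main inequality} --- asserts that the restriction of $P(X(t)\in\cdot\,)$ to $Q^{\circ}$ is absolutely continuous with density bounded by $\sup_{y\in\bar Q}p^Y_t(y,\cdot)$, hence by $M:=\sup_{y,z\in\bar Q}p^Y_t(y,z)$. Its proof rests on decomposing the path of $X$ relative to $\bar Q$: on the time segments where $X$ lies in $\bar Q$ the drift is bounded by $C$ and can be handled by a change-of-measure/comparison argument, whereas the excursions of $X$ outside $\bar Q$ are, for the purpose of an \emph{upper} bound, dominated by the reflection mechanism of $Y$.

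\emph{From the estimate to the three claims.} For each $t>0$ the series for $p^{(1)}_t$ shows $M<\infty$, and $M$ depends on $x$ only through $r$, which can be chosen locally uniformly on $U$. Undoing the normalisation we obtain: for every $y\in U$ there is a neighbourhood $Q(y)\subseteq U$ of $y$ and a constant $M(y)$, locally bounded in $y$, with $P(X(t)\in B)\le M(y)\vol(B)$ for every Borel $B\subseteq Q(y)$. Consequently $\rho_t(y)=\limsup_{\epsilon\to0}P(\|X(t)-y\|<\epsilon)/\vol(B_{\epsilon,\infty}(0))\le M(y)$, so $\rho_t$ is locally bounded on $U$; covering $U$ by countably many of the $Q(y)$ shows $P(X(t)\in\cdot\,)|_U$ is absolutely continuous with a locally bounded density $g$; and the Lebesgue differentiation theorem applied along the cubes $B_{\epsilon,\infty}(y)$ gives $g(y)=\lim_{\epsilon\to0}P(\|X(t)-y\|<\epsilon)/\vol(B_{\epsilon,\infty}(0))$ for a.e.\ $y$, which coincides with $\rho_t(y)$ whenever the limit exists; hence $\rho_t$ is a version of the density of $X(t)$ on $U$. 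Finally, if $X$ has locally bounded drift then it has bounded drift while in $B_n(0)$ for every $n\in\mathbb N$, so the above applies with $U=B_n(0)$; since the defining $\limsup$ for $\rho_t$ does not depend on $U$, these local densities are consistent, $\rho_t$ is finite and locally bounded on $\mathbb R^d$, and $P(X(t)\in B)=\lim_n P(X(t)\in B\cap B_n(0))=\lim_n\int_{B\cap B_n(0)}\rho_t\,dx=\int_B\rho_t\,dx$ for every Borel $B\subseteq\mathbb R^d$ by monotone convergence.

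\emph{Main obstacle.} The crux is the link of the second step. Because $X$ is merely an It\^o process --- non-Markovian, with $\beta$ only progressively measurable --- and the drift bound is active \emph{only while $X$ is in $U$}, neither a Markovian comparison theorem nor a global Girsanov transform is available: a path may leave and re-enter $U$ infinitely often, and nothing whatsoever is assumed about $\beta$ off $U$. The delicate work is to organise the path of $X$ around $\bar Q$ --- isolating the segments carrying a $C$-bounded drift, for instance via a last-exit-type decomposition at a slightly smaller box --- to show that substituting reflection for the uncontrolled exterior behaviour can only increase the relevant probabilities, and to control the interior contribution despite the Radon--Nikodym densities arising from removing a bounded drift being themselves unbounded, all while keeping the constants explicit. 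Establishing the closed-form series for $p^{(1)}_t$ and the finiteness and $r$-uniformity of its supremum is a comparatively routine but necessary ingredient.
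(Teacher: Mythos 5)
Your high-level strategy --- reduce to the identity diffusion coefficient, compare $X$ to a doubly reflected Brownian motion with explicit transition density, and pass from a local Lebesgue bound to a density version via the Lebesgue differentiation theorem --- is the same as the paper's, and the final paragraph (covering $U$ by small boxes, consistency across $U=B_n(0)$, monotone convergence) is fine and essentially matches what the paper does in Proposition \ref{p:locmaxden}. But the crux of the whole proof is the comparison, and there your proposal has a genuine gap: you correctly flag it as the ``main obstacle'' and then do not prove it. The tools you sketch to close it are also not the ones that work cleanly here. A Girsanov change of measure only removes the drift while $X$ is inside $U$, and the resulting Radon--Nikodym density is not bounded, so no density bound comes out for free (you note this yourself); a last-exit-type decomposition at a smaller box requires a Markov-type structure that the merely progressively measurable $\beta$ does not give, and last-exit times are not stopping times, so the decomposition is also awkward to make rigorous. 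As written, the proposal therefore asserts Proposition \ref{p:main inequality} rather than proving it.

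The paper closes this gap with a different and much more elementary mechanism, which is the idea that is missing from your write-up. Rather than comparing the $d$-dimensional $X$ directly to a $d$-dimensional reflected process with the discontinuous drift $-C\,\sgn(\cdot)$ on a cube, it applies Tanaka's formula \emph{coordinatewise} to $|X_j(t)-x_j|$. Folding onto $[0,\infty)$ turns the local time at $x_j$ into an upward reflection at $0$, turns the worst-case drift into the constant $-C$, and keeps the martingale part a Brownian motion; L\'evy's characterisation then shows the $d$ folded martingale parts are jointly a standard $d$-dimensional Brownian motion, giving independence across coordinates, so that $P(\|X(t)-x\|\le a)\le\prod_j P(Z_j(t)\le a)$ for one-dimensional $[0,l]$-DRBMs $Z_j$. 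The comparison itself (Lemma \ref{l:comparision1}) is a short \emph{pathwise} argument: if the folded process ever dropped strictly below the $[0,l]$-DRBM started at a smaller point, the first-touch time would produce a contradiction with $\beta\geq -C$ and monotonicity of the reflection terms --- no change of measure, no excursion decomposition. I would suggest reworking your second step around this one-dimensional folding idea rather than trying to engineer a direct $d$-dimensional comparison with a sign-drift reflected process.
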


	The above theorem does in fact follow easily from the following more technical statement which contains an explicit upper bound for the density.
	\begin{prop}\label{p:main inequality}
		Let $X$ be an It\^o-process where the diffusion coefficient is constant equal to the identity matrix on $\mathbb R^d$ and assume that the drift of $X$ is bounded by $C$ while $X$ is in some open set $U\subseteq \mathbb R^d$. We define 
		$\rho_t(x) := \limsup_{\epsilon\rightarrow 0}\frac{P(||X(t)-x||<\epsilon )}{\vol(B_{\epsilon,\infty}(0)}\in[0,\infty]$ for $t>0$, $x\in\mathbb R^d$. Let $x\in U$ and $l>0$ such that $B_{l,\infty} (x)\subseteq U$.
		
		Then we have
		\begin{align*} 
			\rho_t(x) &\leq  \prod_{j=1}^d\left(\frac{C\exp(-2Cl)}{1-\exp(-2Cl)} + \frac{\phi(z_j)}{\sqrt{t}} + C \Phi(z_j) + e^{Ca_j-C^2t/2}\frac{(3+a_jC)^2}{ltC^2}\right)
		\end{align*}
		where $a_j:=\min\{l,|X_j(0)-x_j|\}$ and $z_j:=\sqrt{t}C-a_j/\sqrt{t}$ for $j=1,\dots,d$.
	\end{prop}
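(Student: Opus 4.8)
The overall plan is to reduce the estimate on $\rho_t(x)$ to a \emph{domination principle}: for every Borel set $A\subseteq B_{l,\infty}(x)$ one wants
\[
P\big(X(t)\in A\big)\ \le\ \int_A K_t(y)\,dy ,
\]
where $K_t(y)$ is a bound for the transition density, evaluated at arguments $y$ near the centre $x$, of a Brownian motion on the box $B_{l,\infty}(x)$ that is reflected at its $2d$ faces and carries in each coordinate $j$ the worst-case constant-magnitude drift $y\mapsto -C\,\sgn(y_j-x_j)$. Such a reflected diffusion is driven by a Brownian motion with independent components, the reflection acts face by face and the worst-case drift decouples coordinate by coordinate, so its transition density factorises as $\prod_{j=1}^d q^{(j)}_t(\cdot,y_j)$, where $q^{(j)}$ is the transition density of a one-dimensional Brownian motion with constant drift of modulus $C$ doubly reflected on an interval of length $2l$. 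Together with the product form $\{\|X(t)-x\|<\epsilon\}=\bigcap_j\{|X_j(t)-x_j|<\epsilon\}$ of the $\|\cdot\|$-ball, this is what produces the per-coordinate factors; granting the domination, dividing by $\vol(B_{\epsilon,\infty}(0))=(2\epsilon)^d$, integrating over $B_{\epsilon,\infty}(x)$ and letting $\epsilon\to 0$ turns the right-hand side into $\prod_j K_t$ evaluated at $x_j$, which is the asserted inequality.

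The first and main step is therefore the domination, and I would obtain it by decomposing according to the last time $g\le t$ at which $X$ meets $\partial B_{l,\infty}(x)$ (with $g:=0$ if this never occurs). On $\{g=0\}$ the path has stayed in the box since time $0$, so $X$ is there an It\^o-process issued from $X(0)$ with drift bounded by $C$ while in the box; a one-dimensional comparison against a Brownian motion with constant inward drift $C$ should bound the corresponding marginal density at $x_j$ by $\phi(z_j)/\sqrt t+C\,\Phi(z_j)$, where $z_j=\sqrt t\,C-a_j/\sqrt t$ encodes, through $a_j=\min\{l,|X_j(0)-x_j|\}$, the initial distance to the target. On $\{g>0\}$ one uses the strong Markov property of $W$ at $g$ and the fact that after $g$ the path remains in $\overline{B_{l,\infty}(x)}$ to dominate the post-$g$ conditional law by that of the doubly reflected worst-case process started on the boundary; its density at $x$, maximised over the elapsed time $t-g$, is then controlled by the stationary density of that process (the source of the term $C\exp(-2Cl)/(1-\exp(-2Cl))$ per coordinate) plus a transient remainder. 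The underlying one-dimensional comparisons would be produced, as indicated in the introduction, by applying the comparison theorem for one-dimensional SDEs to $\pm(X_j-x_j)$ after smoothing the signum drift via the It\^o--Tanaka formula.

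The second step is purely analytic and belongs in Appendix~\ref{sec__Normal_expectations_and_estimates}: one writes $q^{(j)}_t$ as its classical image/eigenfunction series, isolates the stationary part, and bounds the remaining terms by a geometric-type series using the elementary Gaussian estimates collected there; this yields the transient contribution $e^{Ca_j-C^2t/2}(3+a_jC)^2/(ltC^2)$ and accounts for the somewhat lossy constants $3+a_jC$ and $1/(ltC^2)$. Collecting these contributions in each coordinate and multiplying over $j$ then gives the displayed product bound.

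I expect the crux to be making the domination rigorous under the stated hypotheses: $\beta$ is merely progressively measurable and its bound is effective only while $X$ lies in $U\supseteq B_{l,\infty}(x)$, so the drift is completely uncontrolled once $X$ leaves the box, and $X$ may enter and exit the box infinitely often before time $t$. The delicate points are that the last boundary time $g$ is not a stopping time, so the conditioning at $g$ must be justified by an excursion-type decomposition or an approximation by stopping times from the right rather than by a naive use of the strong Markov property; that the conditioned post-$g$ dynamics genuinely retain the Brownian structure needed for the comparison; and that the one-dimensional comparison dominates the density \emph{at the target point} $x_j$ and not merely in distribution. Everything else — the factorisation of the reflected transition density, the passage to the $\limsup$ in $\epsilon$, and the series estimates — is routine once these points are settled.
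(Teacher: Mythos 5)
You have correctly identified the target object of the comparison --- a product over coordinates of one-dimensional doubly reflected Brownian transition densities with worst-case drift of modulus $C$ pointing at the target (your box-reflected process with drift $-C\sgn(y_j-x_j)$ is exactly the unfolded version of the paper's DRBM on $[0,l]$ with drift $-C$) --- and the second, analytic half of your plan coincides with Propositions \ref{p:RBMwD2}--\ref{p:Improved} and Appendix \ref{sec__Normal_expectations_and_estimates}. But the route you propose for the central domination step has a genuine gap, which you partly flag yourself: decomposing at the last time $g$ at which $X$ meets $\partial B_{l,\infty}(x)$ requires conditioning at a non-stopping time for a process that is not Markovian (the drift is merely progressively measurable and is completely uncontrolled off the box), and the post-$g$ conditional law is not an It\^o-process of the original type, so the ``comparison after $g$'' has nothing to stand on. Your attribution of the individual terms of the bound to the events $\{g=0\}$ and $\{g>0\}$ is also not how they arise: all four summands in each factor come from one series representation of $p_{l,t}(a_j,0)$ (the stationary part, the integral approximating the series, and the Euler--Maclaurin error), not from separate probabilistic events.

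The paper avoids any last-exit or excursion decomposition by a purely pathwise coupling (Lemmas \ref{l:comparision1} and \ref{l:comparisond}, Proposition \ref{p:maximality}). Tanaka's formula folds each coordinate into $|X_j(t)-x_j|$, an It\^o-process reflected at $0$ whose drift has modulus at most $C$ while $X$ is in the box; one then builds a doubly reflected Brownian motion $Z_j$ on $[0,l]$ with drift $-C$, started at $a_j\le|X_j(0)-x_j|$ and driven by the \emph{same} martingale $B_j=\int\sgn(X_j-x_j)\,dW_j$, and proves $Z_j(t)\le|X_j(t)-x_j|$ for all $t$ by an elementary contradiction on the open set where the inequality would fail --- no Markov property and no conditioning are used, and excursions of $X$ out of the box cause no trouble because $Z_j\le l$ always. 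L\'evy's characterisation shows $(B_1,\dots,B_d)$ is a standard Brownian motion, so the $Z_j$ are independent, whence $P(\|X(t)-x\|\le\epsilon)\le\prod_j P(Z_j(t)\le\epsilon)$ and, after dividing by $(2\epsilon)^d$ and letting $\epsilon\searrow 0$, $\rho_t(x)\le 2^{-d}\prod_j p_{l,t}(a_j,0)$. To repair your argument, replace the last-exit decomposition by this coupling; as written, the key step of your proof is missing.
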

	\begin{proof}
		This is the joint conclusion of Propositions \ref{p:maximality} and \ref{p:Improved}.
	\end{proof}
	
	The result can be transferred to the situation where the drift coefficient of the SDE is globally bounded. The following Corollary reproduces Ba\~nos \& Kr\"uhner \cite{banos2016optimal}'s result.
	
	\begin{cor}
		Let $X$ be an It\^o-process where the diffusion coefficient is constant equal to the identity matrix on $\mathbb R^d$ and assume that the drift of $X$ is bounded by $C$. We define 
		$\rho_t(x) := \limsup_{\epsilon\rightarrow 0}\frac{P(||X(t)-x||<\epsilon )}{\vol(B_{\epsilon,\infty}(0))}\in[0,\infty]$ for $t>0$, $x\in\mathbb R^d$. 
		\begin{align*}
			\rho_t(x) &\leq  \prod_{j=1}^d 
			\left(\frac{\phi(z_j)}{\sqrt{t}} + C \Phi(z_j)\right) 
			\leq \left(\frac{1}{\sqrt{2\pi t}}+C\right)^d 
		\end{align*}
		where $(z_j)_{j\leq d}$ is given by $z_j := 
		\sqrt{t}C-|X_j(0)-x_j|/\sqrt{t}$. In particular, if $d=1$, then we have
		\begin{align*}
			\rho_t(x) &\leq \frac{\phi(z_1)}{\sqrt{t}} + C \Phi(z_1).
		\end{align*}
	\end{cor}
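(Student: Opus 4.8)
The plan is to obtain the Corollary as a direct specialization of Proposition \ref{p:main inequality} to the case $U=\mathbb R^d$, followed by letting the localization radius $l$ tend to infinity. Since the drift of $X$ is bounded by $C$ everywhere, the hypothesis ``$X$ has bounded drift while $X$ is in $U$'' holds with $U=\mathbb R^d$ and the very same constant $C$. Consequently the ball inclusion $B_{l,\infty}(x)\subseteq U$ required in Proposition \ref{p:main inequality} is satisfied for \emph{every} $l>0$ and every $x\in\mathbb R^d$, so for each fixed $x$ and each $l>0$ we get
\[
\rho_t(x)\leq\prod_{j=1}^d\left(\frac{C\exp(-2Cl)}{1-\exp(-2Cl)}+\frac{\phi(z_j^{(l)})}{\sqrt t}+C\,\Phi(z_j^{(l)})+e^{Ca_j^{(l)}-C^2t/2}\frac{(3+a_j^{(l)}C)^2}{ltC^2}\right),
\]
with $a_j^{(l)}:=\min\{l,|X_j(0)-x_j|\}$ and $z_j^{(l)}:=\sqrt t\,C-a_j^{(l)}/\sqrt t$.

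Next I would send $l\to\infty$ with $x$ held fixed. For all $l\geq|X_j(0)-x_j|$ one has $a_j^{(l)}=|X_j(0)-x_j|$ and hence $z_j^{(l)}=z_j=\sqrt t\,C-|X_j(0)-x_j|/\sqrt t$, so the two middle summands stabilize to the constant $\phi(z_j)/\sqrt t+C\Phi(z_j)$. The first summand $C\exp(-2Cl)/(1-\exp(-2Cl))$ tends to $0$, and for large $l$ the last summand equals $e^{C|X_j(0)-x_j|-C^2t/2}(3+|X_j(0)-x_j|C)^2/(ltC^2)$, which is $O(1/l)\to 0$. As the product over $j$ is finite, the limit of the products is the product of the limits, yielding $\rho_t(x)\leq\prod_{j=1}^d\bigl(\phi(z_j)/\sqrt t+C\,\Phi(z_j)\bigr)$, which is the first asserted inequality; the case $d=1$ is just this inequality with the single index $j=1$.

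For the second, cruder inequality I would bound each factor by the trivial estimates $\phi(z_j)\leq\phi(0)=1/\sqrt{2\pi}$ and $\Phi(z_j)\leq 1$, so that each factor is at most $1/\sqrt{2\pi t}+C$ and the product is at most $\bigl(1/\sqrt{2\pi t}+C\bigr)^d$. I do not foresee any genuine obstacle: the only mild point is justifying the interchange of the limit $l\to\infty$ with the finite product and verifying that the two ``correction'' terms of Proposition \ref{p:main inequality} vanish in the limit, which is immediate once one observes that $x$, and therefore each $|X_j(0)-x_j|$, is fixed while $l$ grows.
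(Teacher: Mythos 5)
Your proof is correct and matches the paper's own argument, which simply cites Proposition \ref{p:main inequality} and passes to the limit $l\to\infty$. You have just spelled out the routine details (that the correction terms vanish and that the remaining factors stabilize once $l\geq|X_j(0)-x_j|$) together with the elementary bounds $\phi(z_j)\leq 1/\sqrt{2\pi}$ and $\Phi(z_j)\leq 1$ for the cruder estimate.
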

	\begin{proof}
		This is immediate from Proposition \ref{p:main inequality} by passing to the limit $l\rightarrow \infty$.
	\end{proof}
	
	Local boundedness also applies to finite dimensional marginals of the process $X$.
	\begin{cor}
		Let $X$ be an It\^o-process where the diffusion coefficient is constant and non-degenerate and assume that its drift is locally bounded. Let $0<t_1<\dots<t_N$ for some $N\in\mathbb N$. Then $ (X_{t_1},\dots,X_{t_N}) $ has a version of its density which is locally bounded.
	\end{cor}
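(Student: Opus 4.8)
I would deduce the statement from Theorem \ref{t:main result} (equivalently, from the explicit estimate of Proposition \ref{p:main inequality}) by peeling off the time points $t_1,\dots,t_N$ one at a time. Realising $(X_{t_1},\dots,X_{t_N})$ as a single time‑marginal of an It\^o process on $\mathbb R^{dN}$ fails, because the process obtained by stacking the coordinates has a \emph{degenerate} diffusion coefficient; so instead I would condition successively on $\mathcal F_{t_{N-1}},\dots,\mathcal F_{t_1}$. The observation that makes this work is that the bound of Proposition \ref{p:main inequality} is \emph{uniform in the initial value} $X(0)$: since $a_j=\min\{l,|X_j(0)-x_j|\}\in[0,l]$ and $z_j=\sqrt tC-a_j/\sqrt t$ ranges over a compact interval, each of the $d$ factors on the right‑hand side of Proposition \ref{p:main inequality} is bounded by a constant depending only on $C,l,t$, namely $M(C,l,t):=\tfrac1{\sqrt{2\pi t}}+C+\tfrac{Ce^{-2Cl}}{1-e^{-2Cl}}+e^{Cl-C^2t/2}\tfrac{(3+lC)^2}{ltC^2}$. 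Hence, after the routine linear change of variables reducing a constant non‑degenerate diffusion coefficient to the identity (which affects the constants but not their independence of $X(0)$), Theorem \ref{t:main result} will be used in the following uniform form: if an It\^o process $Y$ in $\mathbb R^d$ with constant non‑degenerate diffusion coefficient has drift bounded by $C$ while $Y\in U$, $B_{l,\infty}(y)\subseteq U$, and $Y(0)$ is deterministic, then $P(\|Y(t)-y\|<\epsilon)\le M'(2\epsilon)^d$ for all sufficiently small $\epsilon>0$, where the threshold and the constant $M'$ depend only on $C,l,t,d$ and the (fixed) diffusion coefficient, but not on $Y(0)$.

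Next I would set up the conditioning. Write $g(x):=\limsup_{\epsilon\to0}(2\epsilon)^{-dN}P(\|X_{t_1}-x_1\|<\epsilon,\dots,\|X_{t_N}-x_N\|<\epsilon)$ for $x=(x_1,\dots,x_N)\in\mathbb R^{dN}$. It suffices to bound $g$ on product sets $V=\prod_{k=1}^N V_k$ with $V_k\subseteq\mathbb R^d$ bounded open, since any compact subset of $\mathbb R^{dN}$ is contained in such a $V$. Fix $l>0$ and, for each $k$, a bounded open set $U_k\supseteq\bigcup_{x\in V_k}B_{l,\infty}(x)$; since $X$ has locally bounded drift there is $C_k>0$ with $\|\beta_t\|\,1_{\{X(t)\in U_k\}}\le C_k$ for all $t$, $P$‑a.s. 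Put $t_0:=0$, $\Delta_k:=t_k-t_{k-1}$ and $X^{(k)}_s:=X_{t_{k-1}+s}$. Then $X^{(k)}$ is an It\^o process for the filtration $(\mathcal F_{t_{k-1}+s})_{s\ge0}$, driven by the Brownian motion $W_{t_{k-1}+\cdot}-W_{t_{k-1}}$, with the same constant non‑degenerate diffusion coefficient, with drift $\beta_{t_{k-1}+\cdot}$ (bounded by $C_k$ while $X^{(k)}\in U_k$), and with $\mathcal F_{t_{k-1}}$‑measurable initial value $X_{t_{k-1}}$. Passing to a regular conditional probability given $\mathcal F_{t_{k-1}}$, the process $X^{(k)}$ becomes an It\^o process with a \emph{deterministic} initial value satisfying the same hypotheses, so the uniform form above yields, for $x_k\in V_k$ and all sufficiently small $\epsilon>0$,
\[
P\bigl(\|X_{t_k}-x_k\|<\epsilon\,\big|\,\mathcal F_{t_{k-1}}\bigr)\le M_k\,(2\epsilon)^d\qquad P\text{-a.s.},
\]
where $M_k$ depends only on $C_k,l,\Delta_k,d$ and is \emph{independent of} $X_{t_{k-1}}$. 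Conditioning successively on $\mathcal F_{t_{N-1}},\dots,\mathcal F_{t_1}$ and using the tower property then telescopes to
\[
P\bigl(\|X_{t_1}-x_1\|<\epsilon,\dots,\|X_{t_N}-x_N\|<\epsilon\bigr)\le\Bigl(\prod_{k=1}^N M_k\Bigr)(2\epsilon)^{dN}
\]
for every $x\in V$ and all sufficiently small $\epsilon>0$, whence $g\le\prod_{k=1}^N M_k<\infty$ on $V$.

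Finally I would conclude as in the proof of Theorem \ref{t:main result}: by the previous step $g$ is finite everywhere and bounded on every bounded set, and the differentiation theory of measures (applied to cubes) shows that the law of $(X_{t_1},\dots,X_{t_N})$ is absolutely continuous with respect to Lebesgue measure and that $g$ is a version of its density; being locally bounded, $g$ is the desired version. I expect the main obstacle to be a careful justification of the clause ``passing to a regular conditional probability, $X^{(k)}$ becomes an It\^o process with deterministic initial value satisfying the same hypotheses'' — standard, but one must verify that under the conditional law the shifted Brownian increments remain a Brownian motion for the shifted filtration — together with the genuinely essential use of the independence of $M_k$ of the random starting point $X_{t_{k-1}}$: without that uniformity the telescoped estimate would degenerate as $N$ grows.
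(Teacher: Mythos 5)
Your proposal is correct and is essentially the argument the paper intends: the paper's own proof of this corollary is literally ``Follows from Proposition \ref{p:main inequality}'', and the successive conditioning on $\mathcal F_{t_{N-1}},\dots,\mathcal F_{t_1}$ with the tower property --- made to work precisely because $a_j=\min\{l,|X_j(0)-x_j|\}\in[0,l]$ renders the explicit bound uniform in the initial value --- is the natural way to fill in that one-line reference. The only point to tighten is that you need a finite-$\epsilon$ bound with a threshold independent of the starting point rather than the $\limsup$ form in which Proposition \ref{p:main inequality} is stated; this is available because the comparison $P(\|X(t)-x\|\le\epsilon)\le P(\|Y(t)\|\le\epsilon)$ of Lemma \ref{l:comparisond} holds for every $\epsilon\in(0,l]$ and the DRBM transition density is continuous, hence bounded uniformly over starting points in $[0,l]$ and endpoints near $0$.
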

	\begin{proof}
		Follows from Proposition \ref{p:main inequality}.
	\end{proof}

	The previous observation and It\^o-Tanaka's formula allow us to make a statement for solutions to $1$-dimensional SDEs where the drift coefficient is bounded and the diffusion coefficient is strictly positive and locally Lipschitz-continuous. While the existence of a solution to such SDEs is not guaranteed by these properties as solutions could e.g.\ explode, we simply assume that we are given a well-behaved solution.
	\begin{cor}
		Let $b,\sigma:\mathbb R\rightarrow\mathbb R$ such that $\sigma(y)>0$ for any $y\in\mathbb R$, $b$ is measurable and locally bounded and $\sigma$ is locally Lipschitz-continuous. Assume that there is an $\mathbb R$-valued It\^o-process $Y$ satisfying
		$$ dY_t = b(Y(t)) dt + \sigma(Y(t)) dW(t). $$
		Then $Y_t$ has a locally bounded version of its density for any $t>0$.
	\end{cor}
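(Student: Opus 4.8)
The strategy is to reduce the statement with a locally Lipschitz diffusion coefficient $\sigma$ to the constant-coefficient case already handled in Theorem~\ref{t:main result} by means of a Lamperti-type transformation via the It\^o--Tanaka formula. Concretely, fix a reference point, say $Y(0)$, and define the scale-type transformation $F(y):=\int_{Y(0)}^y \frac{1}{\sigma(u)}\,du$. Since $\sigma$ is continuous and strictly positive, $F$ is a strictly increasing $C^1$ function with locally Lipschitz derivative $1/\sigma$, hence locally absolutely continuous with locally bounded (in fact locally Lipschitz) weak second derivative; in particular $F$ is the difference of two convex functions, so the It\^o--Tanaka formula applies to $Z(t):=F(Y(t))$. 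The plan is to compute $dZ_t$ and check that the diffusion coefficient becomes identically $1$, while the new drift is $\tilde b(Y(t))$ with $\tilde b(y) = \frac{b(y)}{\sigma(y)} - \tfrac12\sigma'(y)$ (interpreting $\sigma'$ via the It\^o--Tanaka local-time term, which collapses to an ordinary integral because $1/\sigma$ is absolutely continuous).

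Next I would argue that $Z$ is an It\^o-process with constant diffusion coefficient $1$ and \emph{locally bounded} drift. The point is that $F$ is a homeomorphism of $\mathbb R$ onto an open interval $I=F(\mathbb R)$, and it maps bounded sets to bounded sets and vice versa (on any compact subinterval of $I$, $F^{-1}$ is well-defined and continuous). Hence for any bounded open $V\subseteq I$, the event $\{Z(t)\in V\}$ equals $\{Y(t)\in F^{-1}(V)\}$ with $F^{-1}(V)$ bounded, and on that event $b(Y(t))$, $\sigma(Y(t))$ and $\sigma'(Y(t))$ are all bounded (using local boundedness of $b$ and local Lipschitz continuity of $\sigma$, which gives local boundedness of $\sigma'$ a.e.), and $\sigma$ is bounded away from $0$ there by continuity and positivity. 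So $Z$ has bounded drift while $Z$ is in $V$; since $V$ was an arbitrary bounded open subset of $I$, one gets a local boundedness statement for the drift of $Z$ on the open set $I$.

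Then I would invoke Theorem~\ref{t:main result} (or directly Proposition~\ref{p:main inequality}) applied to $Z$ on the open set $U:=I$: for every $t>0$, $Z(t)$ has a version of its density that is locally bounded on $I$. Finally I would transfer this back to $Y(t)=F^{-1}(Z(t))$ by the change-of-variables formula for densities: since $F^{-1}:I\to\mathbb R$ is $C^1$ with derivative $(F^{-1})'(z)=\sigma(F^{-1}(z))$, the density of $Y(t)$ is $\rho^Z_t(F(y))\,\cdot\,\frac{1}{\sigma(y)}$, which is locally bounded on $\mathbb R$ because $\rho^Z_t$ is locally bounded on $I$, $F$ maps compacts to compacts, and $1/\sigma$ is locally bounded. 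The main obstacle I anticipate is the careful justification of the It\^o--Tanaka step: one must confirm that $F$ (being a difference of convex functions with absolutely continuous first derivative) produces no genuine local-time contribution, so that $Z$ is a bona fide It\^o-process of the required form, and one must handle the a.e.-defined $\sigma'$ cleanly. Everything else is bookkeeping about how $F$ and $F^{-1}$ interact with boundedness of sets.
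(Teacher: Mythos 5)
Your proposal is correct and follows essentially the same route as the paper: the Lamperti-type transformation $F(y)=\int \frac{1}{\sigma(u)}du$, the It\^o--Tanaka formula to obtain a unit-diffusion It\^o-process with locally bounded drift, an appeal to the constant-coefficient result, and a transfer back to $Y$. The only (cosmetic) difference is that the paper transfers the density bound back via its Lemma \ref{l:Transfer of density} for locally Lipschitz maps, whereas you use the explicit change-of-variables formula for the $C^1$ diffeomorphism $F$; both work, and your explicit attention to the image interval $I=F(\mathbb R)$ possibly being a proper subinterval of $\mathbb R$ is a point the paper glosses over.
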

	\begin{proof}
		Define $F(y) := \int_0^y \frac{1}{\sigma(u)} du$ for any $y\in\mathbb R$. Note that $F$ is invertible and continuously differentiable with $F'(y) = \frac{1}{\sigma(y)}$. Since $\sigma$ is locally Lipschitz-continuous we find that $F'$ is absolutely continuous and a version of its absolutely continuous derivative satisfies $F''(y) = \frac{-\sigma'(y)}{\sigma(y)^2}$ where $\sigma'$ is a locally bounded version of the absolutely continuous derivative of $\sigma$. It\^o-Tanaka formula \cite[Chapter IV, Theorem 71]{protter.05} yields that $X_t := F(Y_t)$ satisfies
		$$ dX(t) = a(X(t)) dt + dW(t) $$
		where $a(x) := \frac{b(F^{-1}(x))}{\sigma(F^{-1}(x))} - \frac{\sigma'(F^{-1}(x))}{2}$ for $t\geq 0$. $a$ is a locally bounded function and, hence, $X_t$ has locally bounded density for any $t>0$. Consequently, Lemma \ref{l:Transfer of density} yields that $Y_t$ has locally bounded density for any $t>0$.
	\end{proof}

	\section{Proofs}\label{sec__proofs}
	
	Simply put, the proof of our main result relies on two steps. In the first step, we establish an upper bound for the density of  $X(t) = x + \int_0^t\beta(s) ds + W(t)$ in terms of the transition density of a $1$-dimensional, doubly reflected Brownian motion. As its transition density is known in terms of a series expansion, we can exploit this representation in order to derive a closed form upper bound for the density of $X$.
	
	First, we recall the definition of doubly reflected Brownian motion (DRBM) with drift.
	\begin{defn}
		A stochastic process $Z$ with continuous sample paths is a doubly reflected Brownian motion with drift $b\in\mathbb R$ on a compact interval $J$ if for any $f\in C^2(\mathbb R,\mathbb R)$ with $f'(x)=0$ for any boundary point $x\in J$ we have
		$$ M^f_t := f(Z_t) - \int_0^t \left(\frac{1}{2}f''(Z_s)+bf'(Z_s)\right)ds, \quad t\geq 0. $$
		is a martingale.
	\end{defn}
	
	We recall the existence and uniqueness statement for the DRBM.
	\begin{prop}\label{p:DRBM}
		Let $J$ be a compact interval with at least two points and $b\in\mathbb R$. Then (on some stochastic basis) there is a DRBM $Z$ with drift $b$ on $J$. If $Y$ is another DRBM with drift $b$ on $J$ (on a possibly different stochastic basis), then $Y$ has the same law as $Z$. 
		
		Moreover, one has (on a possibly enlarged probability space) that
		$$ dZ(t) = b dt + dW(t) + R(t) $$
		where $R$ has sample paths of finite variation and $dR$ is carried by the set $\{t\geq 0: Z(t)\text{ is at the boundary of }J\}$ and $W$ is some one-dimensional standard Brownian motion.
	\end{prop}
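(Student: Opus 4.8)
The plan is to construct $Z$ pathwise via the Skorokhod map on the interval and to reduce the uniqueness assertion to pathwise uniqueness of that map. Write $J=[\alpha,\beta]$ with $\alpha<\beta$. First I would recall the deterministic Skorokhod problem on $J$: to every continuous $\psi\colon[0,\infty)\to\mathbb R$ with $\psi(0)\in J$ there corresponds a unique pair $(z,\eta)$ of continuous paths with $z=\psi+\eta$ taking values in $J$, $\eta=\eta^\alpha-\eta^\beta$, $\eta^\alpha,\eta^\beta$ nondecreasing and vanishing at $0$, and $\eta^\alpha$ (resp.\ $\eta^\beta$) increasing only at times $s$ with $z(s)=\alpha$ (resp.\ $z(s)=\beta$); moreover the solution map $\Gamma\colon\psi\mapsto z$ is Lipschitz for the locally uniform topology (Tanaka's construction for convex domains applies, and an explicit formula for the two-sided map is due to Kruk, Lehoczky, Ramanan and Shreve). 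Applying this pathwise to the path $\psi=(x_0+bt+W_t)_{t\geq 0}$, I would set $Z:=\Gamma(\psi)$ and $R_t:=\eta^\alpha_t-\eta^\beta_t$; then $Z$ has continuous $J$-valued paths, $R$ has finite variation, $dR$ is carried by $\{t\colon Z(t)\in\partial J\}$, and $dZ_t=b\,dt+dW_t+dR_t$, which is exactly the asserted decomposition (no enlargement is needed in this direction). To verify that $Z$ is a DRBM with drift $b$ on $J$, I would take $f\in C^2(\mathbb R,\mathbb R)$ with $f'(\alpha)=f'(\beta)=0$ and apply It\^o's formula:
\[
f(Z_t)=f(Z_0)+\int_0^t f'(Z_s)\,dW_s+\int_0^t\Bigl(\tfrac12 f''(Z_s)+bf'(Z_s)\Bigr)ds+\int_0^t f'(Z_s)\,dR_s ,
\]
where the last integral vanishes since $dR$ lives on $\{Z\in\partial J\}$, where $f'$ vanishes; hence $M^f_t=f(Z_0)+\int_0^t f'(Z_s)\,dW_s$ is a true martingale, $f'$ being bounded on the compact set $J$.

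For uniqueness, let $Y$ be any DRBM with drift $b$ on $J$, started at $x_0$. The crux is to show that $Y$ is a continuous semimartingale admitting
\[
Y_t=x_0+bt+\widetilde W_t+\widetilde R^\alpha_t-\widetilde R^\beta_t ,
\]
with $\widetilde W$ a standard Brownian motion for the augmented filtration generated by $Y$ and $\widetilde R^\alpha,\widetilde R^\beta$ continuous nondecreasing processes null at $0$ with $\widetilde R^\alpha$ (resp.\ $\widetilde R^\beta$) flat off $\{Y=\alpha\}$ (resp.\ $\{Y=\beta\}$). Granting this, the pair $(Y,\widetilde R^\alpha-\widetilde R^\beta)$ solves the Skorokhod problem on $J$ for the path $\psi^Y=(x_0+bt+\widetilde W_t)_{t\geq 0}$, so $Y=\Gamma(\psi^Y)$ almost surely by uniqueness of $\Gamma$; since $\Gamma$ is a fixed measurable map and $\psi^Y$ has the same law in $C([0,\infty),\mathbb R)$ as $\psi$, this forces $Y\overset{d}{=}\Gamma(\psi)=Z$, which is the claim (and simultaneously delivers the semimartingale decomposition for a general DRBM, after the possible enlargement used to produce $\widetilde W$). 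To extract the decomposition I would apply the defining martingale property to a fixed strictly increasing $h\in C^2(\mathbb R,\mathbb R)$ with $h'(\alpha)=h'(\beta)=0$ and $h'>0$ on $(\alpha,\beta)$ (for instance a primitive of $x\mapsto(x-\alpha)(\beta-x)$, suitably extended off $J$): this makes $h(Y)$ a semimartingale, hence $Y$ too after composing with the $C^2$ inverse of $h$ on compact subintervals of $(\alpha,\beta)$ and localising across boundary excursions; testing against quadratic functions adjusted to have vanishing derivative on $\partial J$ would identify $\langle Y\rangle_t=t$ and the finite-variation part as $bt$ plus a term supported on $\{Y\in\partial J\}$, whence L\'evy's characterisation produces $\widetilde W$; and testing against functions that are affine near one endpoint only pins down the one-sided monotonicity at each boundary point. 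An economical alternative is to bypass this hands-on step entirely and invoke the well-posedness of the Stroock--Varadhan submartingale problem for the reflected diffusion on $J$, to which the martingale characterisation above reduces.

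I expect the main obstacle to be the uniqueness half: existence and the decomposition of the \emph{constructed} $Z$ are routine once the Skorokhod map is available, but converting the comparatively weak martingale characterisation of an \emph{arbitrary} DRBM into a genuine Skorokhod decomposition is the delicate part --- in particular one must show that $Y$ spends Lebesgue-null time at $\partial J$ and that the compensating boundary process is of finite variation and one-signed at each endpoint. That is precisely where one needs either a careful local-time analysis near the boundary or the Stroock--Varadhan well-posedness machinery.
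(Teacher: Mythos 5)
The paper does not prove this proposition at all: it is established by citation, namely \cite[Chapter 8, Theorem 1.1]{ethier.kurtz.86} for existence and uniqueness in law and \cite[Section 2.8.C, Exercise 8.9]{karatzas.shreve.98} for the semimartingale decomposition. Your proposal instead attempts a self-contained argument via the two-sided Skorokhod map $\Gamma$ on $J$. The existence half is correct and routine, exactly as you say: $Z:=\Gamma(x_0+b\cdot+W)$ inherits the decomposition $dZ=b\,dt+dW+dR$ directly from the Skorokhod problem, and It\^o's formula together with $f'=0$ on $\partial J$ kills the boundary integral, so $M^f$ is a martingale. This buys an explicit construction and Lipschitz continuity of the solution map, which the paper's citation does not make visible.

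The uniqueness half is where your argument, as written, has a real gap --- one you candidly flag yourself. The definition of a DRBM in the paper is only a martingale characterisation over the restricted test class $\{f\in C^2: f'=0 \text{ on } \partial J\}$, and extracting from this that an arbitrary $Y$ is a semimartingale with decomposition $x_0+bt+\widetilde W_t+\widetilde R^\alpha_t-\widetilde R^\beta_t$ (with $\widetilde R^\alpha,\widetilde R^\beta$ monotone, flat off the respective boundary sets) is not accomplished by the manipulations you sketch: composing with the inverse of a strictly increasing $h$ with $h'=0$ at the endpoints degenerates exactly at the boundary, and ``localising across boundary excursions'' does not obviously control the accumulation of infinitely many excursions, nor does it show that $Y$ spends Lebesgue-null time at $\partial J$ or that the boundary term is of finite variation and one-signed at each endpoint. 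The clean repair is the one you name as the ``economical alternative'': well-posedness of the Stroock--Varadhan submartingale problem, which is essentially the content of the Ethier--Kurtz theorem the paper cites. So your route is viable and more informative than the paper's, but to be a complete proof it must either carry out the delicate local-time analysis at the boundary or fall back on the same literature the paper invokes; as it stands the uniqueness step is an outline rather than a proof.
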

	\begin{proof}
		See \cite[Chapter 8, Theorem 1.1]{ethier.kurtz.86} for the uniqueness in law and \cite[Section 2.8.C, Exercise 8.9]{karatzas.shreve.98}.
	\end{proof}
	
	Before we start, let us fix some notation. For $l>0$ and $C\geq 0$, we denote by $Z^l$ a doubly reflected Brownian motion on $[0,l]$ with drift $-C$. In addition, we denote its transition density by $p_{l,t}(x,y)$, i.e.\ for any 
	Borel set $A\subseteq \mathbb R$ we have $P(Z^l(T) \in A| Z^l(t) = x) = \int_A p_{l,T-t}(x,y)dy$ (existence of transition density follows from \cite[p.\ 193, formula (13)]{veestraeten.04}. Let $(\Omega,\mathfrak A,(\mathcal F_t)_{t\geq 0},P)$ be a filtered probability space with $\mathcal F$ right-continuous and $\mathcal F_0$ containing all $P$-null sets. Further, let $W$ be an $\mathcal F$-Brownian motion.
	
	We need the following technical result which compares the probability that a $1$-dimensional It\^o-process with diffusion coefficient equal to $1$ is at a fixed time $t$ in an open interval to that of a doubly reflected Brownian motion with the same starting value but smaller drift.
	\begin{lem}\label{l:comparision1}
		Let $\beta$ be a progressively measurable $\mathbb R$-valued process with $|\beta(t)|\leq C$ for any $t\geq 0$. Let $0\leq z_0\leq x_0$ and
		\begin{align*}
			X(t) &:= x_0 + \int_0^t \beta(s) ds + W(t) + R_t(X), \\
			Z(t) &:= z_0 - Ct + W(t) + R_t(Z) - A(t)
		\end{align*}
		where $A$ is any continuous increasing progressively measurable process with $A(0)=0$ and $R(X)$ resp.\ $R(Z)$ are the respective upward reflection terms at zero for $X$ resp.\ $Z$, i.e.\ 
		\begin{align*}
			R_t(X) := \sup \left\{  \max\{0, -(x_0 + \int_0^u \beta(s) ds + W(u))\} : u\in [0,t] \right\}, \\
			R_t(Z) := \sup \left\{  \max\{0, -(z_0 - Cu + W(u) - A(u))\} : u\in [0,t] \right\}.
		\end{align*}
		
		Then $Z(t) \leq X(t)$ for any $t\geq 0$.
	\end{lem}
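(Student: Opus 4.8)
The plan is to reduce the lemma to a deterministic, pathwise statement about the one–dimensional reflection–at–zero (Skorokhod) map, exploiting that $X$ and $Z$ are driven by the \emph{same} Brownian motion $W$. Write the unreflected parts
\[
\widetilde X(t):=x_0+\int_0^t\beta(s)\,ds+W(t),\qquad \widetilde Z(t):=z_0-Ct+W(t)-A(t).
\]
Using the elementary identity $\sup_u\max\{0,g(u)\}=\max\bigl\{0,\sup_u g(u)\bigr\}$, the reflection terms prescribed in the statement are precisely $R_t(X)=\bigl(\sup_{u\le t}(-\widetilde X(u))\bigr)^+$ and $R_t(Z)=\bigl(\sup_{u\le t}(-\widetilde Z(u))\bigr)^+$, both nonnegative, and $X(t)=\widetilde X(t)+R_t(X)$, $Z(t)=\widetilde Z(t)+R_t(Z)$. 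Note that no uniqueness theory for a reflection problem enters here, since these reflection terms are given by explicit suprema.

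The key observation is that the reflection map, while \emph{not} order–preserving in general, does preserve order as soon as the difference of its two inputs is nondecreasing. Here the difference is
\[
D(t):=\widetilde X(t)-\widetilde Z(t)=(x_0-z_0)+\int_0^t\bigl(\beta(s)+C\bigr)\,ds+A(t),
\]
which is nondecreasing with $D(0)=x_0-z_0\ge 0$ — indeed $\beta(s)+C\ge 0$ because $|\beta(s)|\le C$, and $A$ is nondecreasing with $A(0)=0$ — so in particular $D\ge 0$ throughout. Now fix $t\ge 0$. For every $u\le t$ we have $-\widetilde X(u)=-\widetilde Z(u)-D(u)\ge -\widetilde Z(u)-D(t)$ by monotonicity of $D$, hence
\begin{align*}
R_t(X)=\Bigl(\sup_{u\le t}(-\widetilde X(u))\Bigr)^+ &\ge \Bigl(\sup_{u\le t}\bigl(-\widetilde Z(u)-D(t)\bigr)\Bigr)^+=\Bigl(\sup_{u\le t}(-\widetilde Z(u))-D(t)\Bigr)^+\\
&\ge \Bigl(\sup_{u\le t}(-\widetilde Z(u))\Bigr)^+-D(t)=R_t(Z)-D(t),
\end{align*}
where the last step uses $(a-c)^+\ge a^+-c$, valid for all $a\in\R$ and $c\ge 0$. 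Therefore $X(t)=\widetilde Z(t)+D(t)+R_t(X)\ge\widetilde Z(t)+R_t(Z)=Z(t)$, which is the claim.

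I expect the only genuinely delicate point to be this refined monotonicity. A naive comparison based merely on $\widetilde X\ge\widetilde Z$ fails, because the one–sided Skorokhod map is not monotone in that sense (a path that dips far below zero and returns picks up a large reflection term and can overtake a larger, non–reflecting path). What rescues the argument is that $\widetilde X-\widetilde Z$ is not just nonnegative but \emph{nondecreasing}; this in turn is exactly where the common Brownian driver, the sign constraint $|\beta|\le C$, and the monotonicity of $A$ are used. Everything else — the identity $\sup_u\max\{0,\cdot\}=\max\{0,\sup_u\cdot\}$, the inequality $(a-c)^+\ge a^+-c$, and checking that the three summands $x_0-z_0$, $\int_0^t(\beta(s)+C)\,ds$, $A(t)$ of $D$ behave as claimed — is routine.
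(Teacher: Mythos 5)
Your proof is correct. The chain of inequalities checks out: $D:=\widetilde X-\widetilde Z$ is indeed nondecreasing with $D(0)=x_0-z_0\ge 0$ because $\beta+C\ge 0$ and $A$ is nondecreasing; the bound $\sup_{u\le t}(-\widetilde X(u))\ge \sup_{u\le t}(-\widetilde Z(u))-D(t)$ follows from $D(u)\le D(t)$; and the elementary inequality $(a-c)^+\ge a^+-c$ for $c\ge 0$ then gives $R_t(X)\ge R_t(Z)-D(t)$, from which $X(t)-Z(t)=D(t)+R_t(X)-R_t(Z)\ge 0$. This is, however, a genuinely different route from the paper's. The paper argues by contradiction on a fixed path: it considers the open set $\{t: X(t)<Z(t)\}$, takes a connected component $U$ with left endpoint $t_0$ where $X(t_0)=Z(t_0)$, observes that $dR(Z)$ is carried on $\{Z=0\}$ so that $R(Z)$ is constant on $U$ (since $Z>X\ge 0$ there), and then shows the increment $X(t)-Z(t)$ over $[t_0,t]$ is nonnegative — a contradiction. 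Your argument replaces this first-crossing/excursion reasoning with direct algebra on the explicit Skorokhod-map formula for the reflection terms. What your version buys is that it is entirely constructive (no contradiction), it does not need the support property of $dR(Z)$ (which the paper asserts rather than proves in detail), and it isolates precisely the hypothesis that makes the one-sided reflection map order-preserving here, namely that the difference of the unreflected inputs is nondecreasing — a point you rightly flag, since mere pointwise domination of the inputs would not suffice. The paper's version, on the other hand, is more robust in settings where the reflection term is characterized only by its support property rather than by an explicit supremum formula.
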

	\begin{proof}
		We will now inspect a single path and assume that $\omega$ is fixed for the remainder of the proof.
		Define $G:= \{ t\geq 0: X(t) < Z(t) \}$. By continuity $G$ is an open set. Assume for contradiction that $G$ is non-empty. Since $G$ is open it is the countable disjoint union of open intervals. Let $U$ be one of those intervals and define $t_0:=\inf(U)\geq 0$. Observe that $X(t_0)=Z(t_0)$ and that $X(t)<Z(t)$ for any $t\in U$. 
		
		Note that $dR_t(Z)$ is carried on the set $\{t\geq 0: Z(t)=0\}$ and $R_t {(\cdot)}$ is continuous in $t$. Since $0\leq X(t)<Z(t)$ for any $t\in U$ we find that $R_t(Z)-R_{t_0}(Z) = 0$ for any $t\in U$. Now, let $t\in U$. We have
		\begin{align*}
			0 &> X(t) - Z(t) \\
			&= (X(t) - X(t_0)) - (Z(t)-Z(t_0)) \\
			&= \int_{t_0}^t (\beta(s)+C) ds + (R_t(X)-R_{t_0}(X)) + (A(t)-A(t_0)) \\
			&\geq 0.
		\end{align*}
	\end{proof}    
	
	We extend Lemma \ref{l:comparision1} to $d$-dimension and show that the reflected processes are independent.
	\begin{lem}\label{l:comparisond}
		Let $X$ be a $d$-dimensional It\^o-process with diffusion coefficient constant equal to the identity matrix. Assume that the drift of $X$ is bounded while $X$ is in the set $B_{l,\infty}(x)$ where $l>0$, $x\in\mathbb R^d$ and we denote the corresponding constant by $C\geq 0$.
		Let $Y_1,\dots,Y_d$ be independent doubly reflected Brownian motions with drift $-C$ on $[0,l]$. Assume that 
		$ |Y_j(0)| \leq |X_j(0) - x_j|$ for any $j=1,\dots,d$.
		
		Then
		$$  P( \|X(t)-x\| \leq a) \leq P( \|Y(t)\| \leq a ) , \quad a\in (0,l].$$
	\end{lem}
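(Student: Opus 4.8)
The strategy is to reduce the $d$-dimensional comparison to a product of one-dimensional comparisons, using (i) the coordinatewise structure of the $\|\cdot\|$-ball, (ii) the independence of the coordinates of the reflected Brownian motions $Y_1,\dots,Y_d$, and (iii) Lemma \ref{l:comparision1} applied in each coordinate. The key structural observation is that $\{\|X(t)-x\|\le a\} = \bigcap_{j=1}^d\{|X_j(t)-x_j|\le a\}$, and likewise $\{\|Y(t)\|\le a\}=\bigcap_{j=1}^d\{|Y_j(t)|\le a\}$; since the $Y_j$ are independent, $P(\|Y(t)\|\le a)=\prod_{j=1}^d P(|Y_j(t)|\le a)$. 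So if I can show $P(|X_j(t)-x_j|\le a)\le P(|Y_j(t)|\le a)$ \emph{simultaneously} via a single coupling — or at least bound the joint probability on the left by the product on the right — I am done. The cleanest route is to construct, on an enlarged space, a coupling of $X$ with processes $\widetilde Y_1,\dots,\widetilde Y_d$ such that each $\widetilde Y_j$ is a DRBM with drift $-C$ on $[0,l]$, the $\widetilde Y_j$ are independent, and pathwise $|\widetilde Y_j(t)-0|$ is dominated above and below appropriately so that $\{|X_j(t)-x_j|\le a\}\subseteq\{|\widetilde Y_j(t)|\le a\}$ for each $j$; then $P(\|X(t)-x\|\le a)\le P\big(\bigcap_j\{|\widetilde Y_j(t)|\le a\}\big)=\prod_j P(|Y_j(t)|\le a)=P(\|Y(t)\|\le a)$.

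To build this coupling I would work coordinate by coordinate. Fix $j$. Write $X_j(t)-x_j = (X_j(0)-x_j) + \int_0^t\beta_j(s)\,ds + W_j(t)$. While $X$ is in $B_{l,\infty}(x)$ we have $\|\beta(t)\|\le C$, hence $|\beta_j(t)|\le C$ during those times; to fit the hypothesis of Lemma \ref{l:comparision1} I must handle the drift also outside $B_{l,\infty}(x)$. This is the point where I need to insert a \emph{localisation / reflection} argument: I replace $X_j-x_j$ by its modulus reflected into $[0,l]$ — a doubly reflected process on $[0,l]$ — and arrange that once the absolute value hits the boundary the relevant comparison has already been decided (an excursion of $|X_j-x_j|$ beyond $l$ means $X$ has left $B_{l,\infty}(x)$, and on $\{|X_j(t)-x_j|\le a\}$ with $a\le l$ such excursions do not affect membership in the event at time $t$). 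Concretely I would apply Lemma \ref{l:comparision1} with $x_0 := |X_j(0)-x_j|\ge 0$, $z_0:=|Y_j(0)|\le x_0$, the driving Brownian motion the one appearing in the reflected representation $dZ^l = -C\,dt + dW + dR$ of Proposition \ref{p:DRBM}, the process $A$ chosen to absorb the reflection at the upper boundary $l$, and $\beta$ a truncated version of $\pm\beta_j$ bounded by $C$; Lemma \ref{l:comparision1} then gives $\widetilde Y_j(t)\le (\text{reflected }|X_j-x_j|)(t)$ pathwise, and a symmetric argument with roles of the boundaries swapped controls the other side, yielding the set inclusion above. Independence of the $\widetilde Y_j$ is obtained by driving them with independent Brownian motions, which is legitimate since the coupling in each coordinate only uses the $j$-th component of $W$ together with fresh independent randomness for the reflection term $A$.

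The main obstacle is the bookkeeping in the previous paragraph: Lemma \ref{l:comparision1} as stated is a comparison of a \emph{singly} reflected Itô process (reflected upward at $0$) with a singly reflected diffusion with smaller drift, whereas here I genuinely need \emph{doubly} reflected processes on $[0,l]$, and the drift of the relevant Itô process is only bounded \emph{while $X\in B_{l,\infty}(x)$}, i.e.\ conditionally on staying away from the upper boundary. Making rigorous that excursions above $l$ are irrelevant for the event $\{|X_j(t)-x_j|\le a\}$ (with $a\le l$), and that the upper-reflection term can be incorporated as the increasing process $A$ of Lemma \ref{l:comparision1} without destroying the sign estimate, is where the care is needed. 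I would isolate this in a short sub-argument: on the event in question, the first hitting time of level $l$ by $|X_j-x_j|$ after $t$ (or the structure of excursions before $t$) does not change the final position relative to $a$, so it suffices to run the comparison up to the exit time of $B_{l,\infty}(x)$, on which $|\beta_j|\le C$ holds and Lemma \ref{l:comparision1} applies verbatim, and then glue. Once this localisation point is settled, the passage from the one-dimensional pathwise inequality to the $d$-dimensional probabilistic statement is immediate via independence as indicated.
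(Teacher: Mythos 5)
Your high-level plan — reduce to a coordinatewise comparison and then multiply over $j$ by independence — is exactly the paper's plan, but the proposal omits or misstates the two technical devices that make both halves of it work. First, you start from $X_j(t)-x_j = (X_j(0)-x_j) + \int_0^t\beta_j(s)\,ds + W_j(t)$ and then speak loosely of ``replacing $X_j-x_j$ by its modulus reflected into $[0,l]$''; the paper instead applies Tanaka's formula to obtain the explicit semimartingale decomposition
$$|X_j(t)-x_j| = |X_j(0)-x_j| + \int_0^t\sign(X_j(s)-x_j)\,\beta_j(s)\,ds + B_j(t) + R_t(X_j),\qquad B_j(t):=\int_0^t \sign(X_j(s)-x_j)\,dW_j(s),$$
so that $|X_j-x_j|$ is \emph{literally} an It\^o process reflected upward at $0$ and driven by the one-dimensional Brownian motion $B_j$. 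Lemma~\ref{l:comparision1} is then applied with this $B_j$ as the common driver and with $A$ taken to be the upper-boundary reflection term of the DRBM on $[0,l]$ (so your ``singly vs.\ doubly reflected'' worry is already absorbed by the $A$ of Lemma~\ref{l:comparision1} and is not an obstacle).

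The more serious gap is in the independence step. You propose to drive each $\widetilde Y_j$ by the $j$-th component of $W$ ``together with fresh independent randomness for the reflection term $A$.'' But the Skorokhod reflection $A$ (and the lower reflection) are deterministic functionals of the driving path, not fresh randomness; and if $\widetilde Y_j$ is driven by $W_j$ it is not coupled to $|X_j-x_j|$ in the way Lemma~\ref{l:comparision1} needs, because the Brownian part of $|X_j-x_j|$ is $B_j$, not $W_j$. The paper's resolution is to drive $Z_j$ with $B_j$, to compute $[B_j,B_k]_t=t\,1_{\{j=k\}}$, and to invoke L\'evy's characterisation so that $B=(B_1,\dots,B_d)$ is a $d$-dimensional standard Brownian motion; hence the $B_j$'s are independent, each $Z_j$ is an adapted functional of $B_j$ alone, and the factorisation $P(\|Z(t)\|\le a)=\prod_j P(Z_j(t)\le a)$ follows. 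This step is essential and is absent from your proposal. (Your worry about the drift being bounded only while $X\in B_{l,\infty}(x)$ is a legitimate subtlety to flag — the paper's proof applies Lemma~\ref{l:comparision1} directly without dwelling on it — but even with that resolved, the coupling/independence mechanism you describe would not produce the stated inequality.)
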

	\begin{proof}
		For $j\in \{1,\dots,d\}$, by Tanaka's formula \cite[p.222, Theorem 1.2]{revuz.yor.99} we have
		\begin{align*}
			|X_j(t)-x_j|&= |X_j(0)-x_j| + \int_0^t\sign(X_j(s)-x_j)\beta_j(s) ds + B_j(t) + R_t(X_j)	
		\end{align*}
		for the standard Brownian motion $B_j(t)=\int_0^t \sign(X_j(s)-x_j)dW_j(s)$ and $R(X_j)$ is the respective upward reflection term which is carried on the set $\{t\geq 0 :X_j(t) = 0\}$.
		
		Let $Z^j$ be a doubly reflected Brownian motion on $[0,l]$ with starting point $|Y_j(0)|$, drift $-C$ and martingale part $B_j$. Proposition \ref{p:DRBM} yields that $P^{Z_j}=P^{Y_j}$. Lemma \ref{l:comparision1} states that
		$$ Z_j(t) \leq |X_j(t)-x_j|,\quad t\geq 0. $$
		Consequently, we find that
		\begin{align*}
			P( |X_j(t)-x_j| \leq a) & \leq P(Z_j(t) \leq a).
		\end{align*}
		Note that $B=(B_1,\dots,B_d)$ is a continuous martingale and we have $[B_j,B_k]_t = t 1_{\{j=k\}}$ for $j,k=1,\dots,d$. Hence, L\'evy's characterisation for Brownian motion \cite[Chapter I.4.54]{jacod2013limit} yields that $B$ is a $d$-dimensional standard Brownian motion. Consequently, $Z_1,\dots,Z_d$ are independent processes. We have
		\begin{align*}
			P( \|X(t) - x\| \leq a) & \leq P( \|Z(t)\| \leq a ) \\
			&= \prod_{j=1}^d P(Z_j(t) \leq a) \\ 
			&= \prod_{j=1}^d P(Y_j(t) \leq a) \\
			&= P( \| Y(t) \| \leq a ). 
		\end{align*}	   
	\end{proof}

	\begin{prop}\label{p:maximality}
		Let $X$ be a $d$-dimensional It\^o-process with diffusion coefficient constant equal to the identity matrix and $t>0$. Let $U\subseteq \mathbb R^d$ be open and assume that the drift of $X$ is bounded by $C\geq0$ while $X$ is in $U$. Define
		$$ \rho_t(x) := \limsup_{\epsilon\searrow0} \frac{P(\|X(t)-x\|\leq \epsilon)}{\vol(B_{\epsilon,\infty}(0))}\in[0,\infty],\quad x\in\mathbb R^d $$
		and we denote the transition density of a doubly reflected Brownian with drift $-C$ on $[0,l]$ over $t$ time units by $p_{l,t}$.
		
		Then we have
		$$ \rho_t(x) \leq \frac{1}{2^d}\prod_{j=1}^d p_{l,t}(a_j,0),\quad x\in U $$
		where $a_j = \min\{l, |X_j(0)-x_j|\}$ and $l>0$ such that $B_{l,\infty}(x)\subseteq U$.
	\end{prop}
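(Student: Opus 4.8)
The plan is to feed the comparison in Lemma~\ref{l:comparisond} into the boundary behaviour of the doubly reflected Brownian transition density. Fix $x\in U$ and $l>0$ with $B_{l,\infty}(x)\subseteq U$, and put $a_j:=\min\{l,|X_j(0)-x_j|\}$ for $j=1,\dots,d$. On some stochastic basis I would construct independent doubly reflected Brownian motions $Y_1,\dots,Y_d$ on $[0,l]$, each with drift $-C$ and deterministic initial value $Y_j(0)=a_j$; existence and uniqueness in law is Proposition~\ref{p:DRBM}. Since $a_j\in[0,l]$ and $|Y_j(0)|=a_j\le|X_j(0)-x_j|$, Lemma~\ref{l:comparisond} applies and yields, for every $\epsilon\in(0,l]$,
$$ P(\|X(t)-x\|\le\epsilon)\ \le\ P(\|Y(t)\|\le\epsilon). $$

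Next I would unfold the right-hand side using independence. Because each $Y_j(t)$ takes values in $[0,l]$, for $\epsilon\le l$ the event $\{\|Y(t)\|\le\epsilon\}$ coincides with $\bigcap_{j=1}^d\{0\le Y_j(t)\le\epsilon\}$, so independence of $Y_1,\dots,Y_d$ together with the definition of the transition density $p_{l,t}$ gives
$$ P(\|Y(t)\|\le\epsilon)\ =\ \prod_{j=1}^d P(Y_j(t)\le\epsilon)\ =\ \prod_{j=1}^d\int_0^\epsilon p_{l,t}(a_j,y)\,dy. $$
Dividing by $\vol(B_{\epsilon,\infty}(0))=(2\epsilon)^d$ then produces, for all $0<\epsilon\le l$,
$$ \frac{P(\|X(t)-x\|\le\epsilon)}{\vol(B_{\epsilon,\infty}(0))}\ \le\ \frac{1}{2^d}\prod_{j=1}^d\frac{1}{\epsilon}\int_0^\epsilon p_{l,t}(a_j,y)\,dy. $$

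Finally I would let $\epsilon\searrow0$. Here I would invoke that $y\mapsto p_{l,t}(a_j,y)$ extends continuously to the closed interval $[0,l]$, in particular is right-continuous at the reflecting boundary $0$; this is read off from the explicit series representation of $p_{l,t}$ derived below (see also \cite{veestraeten.04}). Granting this, each average $\tfrac1\epsilon\int_0^\epsilon p_{l,t}(a_j,y)\,dy$ converges to $p_{l,t}(a_j,0)$, hence the right-hand side above converges to $\tfrac{1}{2^d}\prod_{j=1}^d p_{l,t}(a_j,0)$, and taking $\limsup_{\epsilon\searrow0}$ on the left gives the asserted bound $\rho_t(x)\le\tfrac1{2^d}\prod_{j=1}^d p_{l,t}(a_j,0)$. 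The only non-routine ingredient is this boundary regularity of $p_{l,t}$: for a generic transition density one would only control the Lebesgue-differentiation limit at almost every point, whereas the argument needs it precisely at the boundary point $0$, and that is exactly what the closed form of the DRBM density supplies.
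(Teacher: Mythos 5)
Your argument is correct and follows essentially the same route as the paper: construct independent DRBMs $Y_j$ started at $a_j$, invoke Lemma~\ref{l:comparisond}, factor the probability by independence, and pass to the limit using the continuity of $p_{l,t}(a_j,\cdot)$ up to the boundary, which the paper supplies via the explicit series representation in Proposition~\ref{p:RBMwD2}. Your explicit flagging of the boundary regularity of the transition density as the one non-routine ingredient is exactly the point the paper's proof relies on when it asserts that the final limit superior is in fact a limit.
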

	\begin{proof}
		Let $x\in \mathbb R^d$, $a_j = \min\{l, |X_j(0)-x_j|\}$ and $l>0$ such that $B_{l,\infty}(x)\subseteq U$.
		
		Let $Y^1,\dots,Y^d$ be independent doubly reflected Brownian motions with drift $-C$ and starting point $Y_j(0) = a_j$ for $j=1,\dots,d$. Then we have $|Y_j(0)| \leq |X_j(0)-x_j|$ for $j=1,\dots, d$ by construction. Lemma \ref{l:comparisond} yields that
		$$ P(\|X(t) - x\| \leq \epsilon) \leq P( \|Y(t)\| \leq \epsilon),\quad \epsilon\in (0,l]. $$
		
		Thus, we have
		$$ \rho_t(x) \leq \limsup_{\epsilon\searrow0} \frac{P(\|Y(t)\|\leq \epsilon)}{\vol(B_{\epsilon,\infty}(0))} = \limsup_{\epsilon\searrow0} \prod_{j=1}^d \frac{P(|Y_j(t)|\leq \epsilon)}{2\epsilon} = \frac{1}{2^d} \prod_{j=1}^d p_{l,t}(a_j,0) $$
		where the last limit superior is a limit because the transition density of the doubly reflected Brownian is continuous.
	\end{proof}

	Proposition \ref{p:maximality} links the transition density of the reflected Brownian motion to an upper bound for the density of an It\^o-process. Consequently, we focus our analysis on the transition density of a doubly reflected Brownian motion. The next result is a known expression for the density. It is adopted from \cite[p.\ 193, formula (13)]{veestraeten.04}.
	\begin{prop}\label{p:RBMwD2}
		Let $p$ be the transition density of a doubly reflected Brownian motion. Then $p$ is continuous in all its arguments and $p_{l,t}(x,0)\leq p_{l,t}(0,0)$. In particular, for all $x\in [0,l]$ and $t>0$ we have
		$$ p_{l,t}(x,0) = \frac{2C}{1-\exp(-2Cl)}+e^{Cx-C^2t/2}\frac{2}{l}\sum_{n=1}^{\infty} \left[f_{t,x}(n\pi/l)-g_{t,x}(n\pi/l)\right],$$
		where $f_{t,x}(z):=\frac{z^2\cos(zx)}{C^2+z^2}\exp(-tz^2/2)$ and $g_{t,x}(z):=\frac{Cz\sin(zx)}{C^2+z^2}\exp(-tz^2/2)$ for $z\in\mathbb R$.
	\end{prop}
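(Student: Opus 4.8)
The plan is to recover the series from the spectral decomposition of the reflected generator; the formula itself is the one quoted in \cite{veestraeten.04}, so one may also simply cite it. A doubly reflected Brownian motion on $[0,l]$ with drift $-C$ is a one-dimensional diffusion: its transition semigroup is self-adjoint on $L^2(m)$, where $m(dy):=2e^{-2Cy}\,dy$ is a constant multiple of its speed measure, and its generator acts as $\mathcal L=\tfrac12\tfrac{d^2}{dy^2}-C\tfrac{d}{dy}$ with Neumann boundary conditions $\psi'(0)=\psi'(l)=0$. Since $[0,l]$ is compact with smooth coefficients, $-\mathcal L$ has compact resolvent, hence discrete spectrum $0=\lambda_0<\lambda_1\le\cdots\to\infty$ and an $L^2(m)$-orthonormal basis of eigenfunctions $(\psi_n)_{n\ge0}$, and for $t>0$ the transition density with respect to $m$ is $p^m_{l,t}(x,y)=\sum_{n\ge0}e^{-\lambda_n t}\psi_n(x)\psi_n(y)$, the series converging locally uniformly on $(0,\infty)\times[0,l]^2$.

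The eigenvalue problem $\tfrac12\psi''-C\psi'+\lambda\psi=0$ with $\psi'(0)=\psi'(l)=0$ is solved explicitly. The characteristic roots are $C\pm\sqrt{C^2-2\lambda}$: for $\lambda=0$ one obtains the constant eigenfunction, while for $\lambda>0$ the boundary conditions force $\sqrt{2\lambda-C^2}=n\pi/l$, i.e.\ $\lambda_n=\tfrac12\bigl(C^2+(n\pi/l)^2\bigr)$, with unnormalised eigenfunction
\[
u_n(y)=e^{Cy}\Bigl(\cos(n\pi y/l)-\tfrac{Cl}{n\pi}\sin(n\pi y/l)\Bigr),\qquad n\ge1.
\]
The factor $e^{2Cy}$ in $u_n^2$ cancels the $e^{-2Cy}$ in $m$, so the norms reduce to elementary trigonometric integrals: $\|u_0\|_m^2=(1-e^{-2Cl})/C$ and $\|u_n\|_m^2=l\bigl(1+C^2l^2/(n\pi)^2\bigr)$ for $n\ge1$. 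The Lebesgue transition density is $p_{l,t}(x,y)=m(y)\,p^m_{l,t}(x,y)=2e^{-2Cy}\sum_{n\ge0}e^{-\lambda_n t}u_n(x)u_n(y)/\|u_n\|_m^2$. Setting $y=0$ (so $u_n(0)=1$ and $m(0)=2$), the $n=0$ term is exactly the stationary contribution $2C/(1-e^{-2Cl})$, and substituting $z=n\pi/l$ and $e^{-\lambda_n t}=e^{-C^2t/2}e^{-tz^2/2}$ into the $n\ge1$ terms and collecting yields precisely the asserted series with $f_{t,x}$ and $g_{t,x}$.

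For the regularity statement, the elementary bounds $|f_{t,x}(z)|\le e^{-tz^2/2}$ and $|g_{t,x}(z)|\le\tfrac12 e^{-tz^2/2}$ (the latter from $2Cz\le C^2+z^2$), uniform in $x\in[0,l]$, show that the series defining $p_{l,t}(x,0)$ converges uniformly for $t\ge t_0>0$ and $l$ in a compact set; more generally the double spectral series for $p_{l,t}(x,y)$ converges locally uniformly in all of its arguments, using $|u_n(x)|\le e^{Cl}(1+Cl/\pi)$, $\|u_n\|_m^2\ge l$ and $\lambda_n\ge n^2\pi^2/(2l^2)$. Being a locally uniform limit of continuous functions, $p$ is continuous in all its arguments.

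Finally, for $p_{l,t}(x,0)\le p_{l,t}(0,0)$ I would argue by a monotone coupling rather than from the series. Realise $Z^x$ and $Z^0$, doubly reflected Brownian motions on $[0,l]$ with drift $-C$ started at $x$ and at $0$, as Skorokhod reflections on $[0,l]$ of $t\mapsto x-Ct+W_t$ and $t\mapsto -Ct+W_t$ for a common Brownian motion $W$; by Proposition \ref{p:DRBM} these have the correct laws. A pathwise comparison in the spirit of Lemma \ref{l:comparision1} gives $Z^0_t\le Z^x_t$ for all $t\ge0$ a.s.: on an interval where $Z^0>Z^x$ one has $Z^0>0$, so $Z^0$ receives no upward push at $0$, and $Z^x<Z^0\le l$, so $Z^x$ receives no downward push at $l$, whence $Z^0-Z^x$ is non-increasing there, contradicting that it vanishes at the interval's left endpoint. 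Hence $P(Z^x_t\le\epsilon)\le P(Z^0_t\le\epsilon)$ for every $\epsilon>0$; dividing by $\epsilon$, letting $\epsilon\downarrow0$ and using the continuity of $y\mapsto p_{l,t}(x,y)$ at $0$ just established gives the claim. The main work I anticipate is computational bookkeeping --- matching the two a priori different closed forms pins down every constant in the Sturm--Liouville step --- together with making the two-sided pathwise comparison rigorous, since Lemma \ref{l:comparision1} is stated for one-sided reflection only; everything else (convergence, continuity, the coupling bound) is routine, and quoting \cite{veestraeten.04} for the formula leaves only the last two points.
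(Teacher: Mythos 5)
Your argument is correct, but it takes a genuinely different route from the paper's. The paper disposes of this proposition in one line by citing the known closed-form series for the transition density of a doubly reflected drifted Brownian motion from \cite[p.~193, formula (13)]{veestraeten.04} (with the substitutions $d=l$, $c=0$, $\sigma=1$, $a=l/\pi$, $\mu=-C$), taking continuity and the maximum at the boundary point for granted. You instead re-derive the series from first principles via the Sturm--Liouville spectral expansion of the Neumann generator $\tfrac12\partial_{yy}-C\partial_y$ on $L^2([0,l],\,2e^{-2Cy}dy)$; your eigenvalue computation ($\lambda_0=0$, $\lambda_n=\tfrac12(C^2+n^2\pi^2/l^2)$), the unnormalised eigenfunctions $u_n(y)=e^{Cy}(\cos(n\pi y/l)-\tfrac{Cl}{n\pi}\sin(n\pi y/l))$, the norm computations, and the bookkeeping that turns $2e^{-2Cy}\sum e^{-\lambda_nt}u_n(x)u_n(y)/\|u_n\|_m^2$ at $y=0$ into the stated $f-g$ series all check out (one should, as you do, also rule out eigenvalues in $(0,C^2/2]$, where the roots are real). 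You also supply two pieces the paper leaves implicit: locally uniform convergence of the spectral series gives continuity of $p$, and a pathwise Skorokhod coupling in the spirit of Lemma \ref{l:comparision1} (now two-sided, with the extra observation that the downward push at $l$ is inactive on the offending interval) gives $Z^0_t\le Z^x_t$ a.s.\ and hence $p_{l,t}(x,0)\le p_{l,t}(0,0)$. Your version is longer and more self-contained --- useful if one wants the paper to stand independent of \cite{veestraeten.04} and wants explicit proofs of the continuity and monotonicity claims --- whereas the paper's approach is economical and leans on the reference for the formula and on standard facts for the rest.
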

	\begin{proof}
		The statement can be found in \cite[p.\ 193, formula (13)]{veestraeten.04}, where $d=l$, $c=0$, $\sigma=1$, $a=l/\pi$, $\mu=-C$.
	\end{proof} 
	
	This exact representation of the transition density $p$ allows us to derive upper bounds for it.

	\begin{prop}\label{p:Improved}
		For all $t,l>0$ and $x\in [0,l]$ we have
		\begin{align*}
			p_{l,t}(x,0) &\leq \frac{2C\exp(-2Cl)}{1-\exp(-2Cl)} + \frac{2}{\sqrt{t}}\phi(\sqrt{t}C-x/\sqrt{t}) + 2C \Phi(\sqrt{t}C-x/\sqrt{t}) 
			\\&\quad+ 2e^{Cx-C^2t/2}\frac{(3+xC)^2}{ltC^2}
			\\ &\leq \frac{1}{l} + \frac{2}{\sqrt{t}}\phi(\sqrt{t}C-x/\sqrt{t}) + 2C \Phi(\sqrt{t}C-x/\sqrt{t}) 
			\\&\quad+ 2e^{Cx-C^2t/2}\frac{(3+xC)^2}{ltC^2}.
		\end{align*}
	\end{prop}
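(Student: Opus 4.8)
\emph{Proof plan.} We may assume $C>0$; if $C=0$ the last summand on the right is $+\infty$ and there is nothing to prove. Write $h(\xi):=f_{t,x}(\xi)-g_{t,x}(\xi)$ and $z:=\sqrt t\,C-x/\sqrt t$, so that by Proposition~\ref{p:RBMwD2}
\[
p_{l,t}(x,0)=\frac{2C}{1-e^{-2Cl}}+e^{Cx-C^2t/2}\,\frac2l\sum_{n=1}^\infty h\!\left(\tfrac{n\pi}l\right).
\]
The plan has three ingredients. (i) Split off the constant as $\frac{2C}{1-e^{-2Cl}}=2C+\frac{2Ce^{-2Cl}}{1-e^{-2Cl}}$, the second summand being exactly the first term of the claimed bound. (ii) Compare the series $\frac2l\sum_{n\ge1}h(n\pi/l)$ with $\frac2\pi\int_0^\infty h(\xi)\,d\xi$, evaluate the latter in closed form, and check that after multiplication by $e^{Cx-C^2t/2}$ and addition of the stray $2C$ from (i) it produces precisely $\frac2{\sqrt t}\phi(z)+2C\Phi(z)$. (iii) Control the Riemann-sum error by the $L^1$-norm of $h'$, which yields the last term. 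The second, cleaner inequality will then follow from $\frac{2Ce^{-2Cl}}{1-e^{-2Cl}}=\frac{2C}{e^{2Cl}-1}\le\frac1l$ (use $e^u\ge1+u$).

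For (ii), I first rewrite $h$: splitting $\frac{\xi^2}{C^2+\xi^2}=1-\frac{C^2}{C^2+\xi^2}$ and collecting terms gives
\[
h(\xi)=\cos(\xi x)\,e^{-t\xi^2/2}-C\,\Re\!\left[\frac{e^{i\xi x}}{C+i\xi}\right]e^{-t\xi^2/2},
\]
since $\frac{C^2\cos(\xi x)+C\xi\sin(\xi x)}{C^2+\xi^2}=C\,\Re\bigl[e^{i\xi x}/(C+i\xi)\bigr]$. Then I insert $\frac1{C+i\xi}=\int_0^\infty e^{-(C+i\xi)s}\,ds$ (valid as $C>0$), apply Fubini, the Fourier transform of the Gaussian, and complete the square in the exponents — a routine computation of the type collected in Appendix~\ref{sec__Normal_expectations_and_estimates} — to arrive at
\[
e^{Cx-C^2t/2}\,\frac2\pi\int_0^\infty h(\xi)\,d\xi=\frac2{\sqrt t}\,\phi(z)-2C\,\Phi(-z).
\]
Since $2C-2C\Phi(-z)=2C\Phi(z)$, adding the $2C$ from (i) turns this into $\frac2{\sqrt t}\phi(z)+2C\Phi(z)$, i.e.\ the two middle terms of the bound.

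For (iii), put $\Delta:=\pi/l$, so $\frac2l\sum_{n\ge1}h(n\Delta)=\frac2\pi\Delta\sum_{n\ge1}h(n\Delta)$ and $n\Delta$ is the right endpoint of $[(n-1)\Delta,n\Delta]$; writing $h(n\Delta)-h(\xi)=\int_\xi^{n\Delta}h'$ and interchanging sum and integral gives the elementary estimate
\[
\Bigl|\Delta\sum_{n\ge1}h(n\Delta)-\int_0^\infty h(\xi)\,d\xi\Bigr|\le\Delta\int_0^\infty|h'(\xi)|\,d\xi ,
\]
all series and integrals converging because of the factor $e^{-t\xi^2/2}$. Combined with (ii), the whole statement reduces to $\int_0^\infty|h'(\xi)|\,d\xi\le\frac{(3+xC)^2}{tC^2}$. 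To prove this, write $h=r\,e^{-t\xi^2/2}$ with $r=\frac{N}{C^2+\xi^2}$, $N:=\xi^2\cos(\xi x)-C\xi\sin(\xi x)$, so $h'=(r'-t\xi r)e^{-t\xi^2/2}$ and $r'=\frac{N'}{C^2+\xi^2}-\frac{2\xi N}{(C^2+\xi^2)^2}$; throughout one uses $|\cos|\le1$, $|\sin(\xi x)|\le\xi x$, $\frac{\xi^2}{C^2+\xi^2}\le1$ and $\frac1{C^2+\xi^2}\le\frac1{C^2}$ (hence $|N|\le\xi^2(1+xC)$ and $\frac{\xi^3}{C^2+\xi^2}\le\xi$), together with $\int_0^\infty\xi\,e^{-t\xi^2/2}d\xi=\frac1t$ and $\int_0^\infty\xi^3 e^{-t\xi^2/2}d\xi=\frac2{t^2}$. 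A term-by-term estimate then yields $\int_0^\infty t\xi|r|\,e^{-t\xi^2/2}d\xi\le\frac{2(1+xC)}{tC^2}$, $\int_0^\infty\frac{2\xi|N|}{(C^2+\xi^2)^2}e^{-t\xi^2/2}d\xi\le\frac{2(1+xC)}{tC^2}$ and $\int_0^\infty\frac{|N'|}{C^2+\xi^2}e^{-t\xi^2/2}d\xi\le\frac{2+2xC+(xC)^2}{tC^2}$; summing, $\int_0^\infty|h'|\le\frac{6+6xC+(xC)^2}{tC^2}\le\frac{(3+xC)^2}{tC^2}$. Assembling (i)--(iii) proves the first inequality, and the second was noted above.

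The only genuinely non-mechanical step is (iii): one must bound every occurrence of $\sin(\xi x)$ by $\xi x$ (never by $1$), so that the surplus powers of $\xi$ combine with $e^{-t\xi^2/2}$ to give exactly the decay $1/(tC^2)$, and one must keep the factors $\xi^2/(C^2+\xi^2)$ and $1/(C^2+\xi^2)$ separate to get the $C$-dependence right — this bookkeeping is where any difficulty lies; the Laplace/Gaussian identity of step (ii) is standard and the rest is elementary.
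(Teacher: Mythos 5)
Your proof is correct and follows essentially the same strategy as the paper: start from the series representation of Proposition~\ref{p:RBMwD2}, approximate the lattice sum by the corresponding Gaussian integral (which the paper evaluates in Lemma~\ref{l:normal expectation cos} and Corollary~\ref{k:Integral}), control the Riemann-sum discrepancy by $\int_0^\infty |h'|$ as in Lemma~\ref{l:fprime inequality} and Corollary~\ref{k:Final}, and then absorb the leftover $2C$ from the boundary term into $2C\Phi(z)$. The only cosmetic departure is that you evaluate $\int_0^\infty h$ by inserting the Laplace kernel $1/(C+i\xi)=\int_0^\infty e^{-(C+i\xi)s}\,ds$ and applying Fubini, whereas the paper invokes Plancherel against a Laplace-distributed auxiliary variable; both produce the same closed form.
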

	\begin{proof}
		This  is an immediate consequence of Proposition \ref{p:RBMwD2} and
		Corollary \ref{k:Final}.
	\end{proof}

	\appendix
	\section{Normal expectations and estimates}\label{sec__Normal_expectations_and_estimates}
	We start by calculating some normal expectations.
	\begin{lem}\label{l:normal expectation cos}
		Let $a>0$, $b \geq0$, and $Z$ be a standard normal random variable. Then 
		\begin{align*}
			\E\left[\frac{a^2\cos(bZ)}{a^2+Z^2}\right] &= a\sqrt{2\pi} e^{a^2/2}\left(\cosh(ab)-\frac12(e^{-ab}\Phi(a-b)+e^{ab}\Phi(a+b))\right) \\
			\E\left[\frac{Z\sin(bZ)}{a^2+Z^2}\right] &= -\sqrt{2\pi} e^{a^2/2}\left(\sinh(ab)+\frac1{2}(e^{-ab}\Phi(a-b)-e^{ab}\Phi(a+b))\right)
		\end{align*}
		hold, where $\Phi$ denotes the distribution function of the standard normal law.
	\end{lem}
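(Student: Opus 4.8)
The plan is to reduce each expectation to a one‑dimensional integral via the Schwinger (Laplace) representation $\frac{1}{a^2+z^2}=\int_0^\infty e^{-s(a^2+z^2)}\,ds$ (valid since $a>0$), and then to evaluate that integral by completing the square twice and reducing it to normal‑tail integrals.

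For the first identity, I would multiply through by $a^2$ and apply Tonelli/Fubini — legitimate since the integrand is dominated by $e^{-a^2s}\phi(z)$, integrable over $(0,\infty)\times\R$ — to obtain
\[\E\!\left[\frac{a^2\cos(bZ)}{a^2+Z^2}\right]=a^2\int_0^\infty e^{-a^2s}\,\E\!\left[\cos(bZ)e^{-sZ^2}\right]ds=a^2\int_0^\infty\frac{e^{-a^2s}}{\sqrt{2s+1}}\,e^{-b^2/(2(2s+1))}\,ds,\]
where the inner expectation is computed from the elementary Gaussian integral $\int_{-\infty}^\infty\cos(bz)e^{-\alpha z^2}\,dz=\sqrt{\pi/\alpha}\,e^{-b^2/(4\alpha)}$ with $\alpha=s+\tfrac12$. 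The substitution $u=\sqrt{2s+1}$ then turns the right‑hand side into $a^2e^{a^2/2}\int_1^\infty\exp\!\big(-\tfrac{a^2u^2}{2}-\tfrac{b^2}{2u^2}\big)\,du$, and completing the square once more via $\tfrac{a^2u^2}{2}+\tfrac{b^2}{2u^2}=\tfrac12(au-b/u)^2+ab$ reduces everything to evaluating $J:=\int_1^\infty e^{-\frac12(au-b/u)^2}\,du$.

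The crucial step is the substitution handling $J$. Put $p_\pm(u):=au\pm b/u$; then $p_-'(u)+p_+'(u)\equiv 2a$, so $du=\tfrac1{2a}(dp_-+dp_+)$, while $p_-(u)^2=p_+(u)^2-4ab$ gives $e^{-\frac12 p_-(u)^2}=e^{2ab}e^{-\frac12 p_+(u)^2}$. Letting $G$ be an antiderivative of $q\mapsto e^{-q^2/2}$ and applying the change of variables in its chain‑rule form $\int_1^\infty\frac{d}{du}G(p_\pm(u))\,du=G(p_\pm(\infty))-G(p_\pm(1))$, which needs no monotonicity of $p_\pm$, I get
\[2aJ=\int_1^\infty e^{-\frac12 p_-^2}p_-'\,du+e^{2ab}\int_1^\infty e^{-\frac12 p_+^2}p_+'\,du=\int_{a-b}^\infty e^{-q^2/2}\,dq+e^{2ab}\int_{a+b}^\infty e^{-q^2/2}\,dq,\]
using $p_-(1)=a-b$, $p_+(1)=a+b$, $p_\pm(\infty)=\infty$. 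Hence $J=\tfrac{\sqrt{2\pi}}{2a}\big(\Phi(b-a)+e^{2ab}\Phi(-a-b)\big)$; substituting back and simplifying with $\Phi(-x)=1-\Phi(x)$ and $e^{ab}+e^{-ab}=2\cosh(ab)$ yields exactly the asserted formula for $\E[a^2\cos(bZ)/(a^2+Z^2)]$.

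For the second identity the quickest route is differentiation in $b$ under the expectation, legitimate because $\big|\partial_b\big(a^2\cos(bZ)/(a^2+Z^2)\big)\big|=a^2|Z\sin(bZ)|/(a^2+Z^2)\le a/2$ uniformly in $b$; this gives $\E[Z\sin(bZ)/(a^2+Z^2)]=-a^{-2}\tfrac{d}{db}\E[a^2\cos(bZ)/(a^2+Z^2)]$. Differentiating the closed form just obtained and noting that $e^{-ab}\phi(a-b)=e^{ab}\phi(a+b)=\tfrac1{\sqrt{2\pi}}e^{-(a^2+b^2)/2}$, so the two $\phi$‑terms cancel, leaves precisely $-\sqrt{2\pi}e^{a^2/2}\big(\sinh(ab)+\tfrac12(e^{-ab}\Phi(a-b)-e^{ab}\Phi(a+b))\big)$. (Alternatively one reruns the first three steps starting from $\frac{z}{a^2+z^2}=z\int_0^\infty e^{-s(a^2+z^2)}\,ds$, which produces an extra factor $u^{-2}$ in the integrand and the companion identity $u^{-2}\,du=\tfrac1{2b}(dp_--dp_+)$.) I expect the only genuinely delicate point to be the crucial step: one must be sure the reduction to $\int_{a\mp b}^\infty e^{-q^2/2}\,dq$ remains valid even though $p_+$ fails to be monotone on $(1,\infty)$ when $b>a$, and phrasing the change of variables in the chain‑rule form above is exactly what removes the need for any case distinction; everything else is routine bookkeeping with Gaussian integrals and normal‑distribution identities.
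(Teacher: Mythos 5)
Your proof is correct, but it takes a genuinely different route from the paper's. The paper recognizes $u\mapsto \frac{a^2\cos(ub)}{a^2+u^2}$ as the characteristic function of $X+Y$ with $Y$ Laplace$(a)$ and $X$ uniform on $\{\pm b\}$, and then applies Plancherel's theorem to rewrite the expectation as $\sqrt{2\pi}\int_{\mathbb R} \frac a4\bigl(e^{-a|x-b|}+e^{-a|x+b|}\bigr)\phi(x)\,dx$, which is a standard exponential-tilting computation yielding the $e^{\pm ab}\Phi(a\mp b)$ terms. You instead use the Schwinger representation $\frac{1}{a^2+z^2}=\int_0^\infty e^{-s(a^2+z^2)}\,ds$, Tonelli, the elementary Gaussian cosine integral, and the substitution $u=\sqrt{2s+1}$, reducing everything to $\int_1^\infty e^{-\frac12(au-b/u)^2}du$, which you evaluate via the $p_\pm(u)=au\pm b/u$ device. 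Your identities $p_-'+p_+'\equiv 2a$, $p_-^2=p_+^2-4ab$, and the values $p_\pm(1)=a\mp b$ all check out, and your decision to phrase the reduction through the fundamental theorem of calculus applied to $u\mapsto G(p_\pm(u))$ correctly sidesteps the non-monotonicity of $p_+$ on $(1,\sqrt{b/a})$ when $b>a$; the resulting $J=\frac{\sqrt{2\pi}}{2a}\bigl(\Phi(b-a)+e^{2ab}\Phi(-a-b)\bigr)$ recombines to the stated formula. For the second identity you use exactly the same device as the paper (differentiation in $b$ under the expectation, justified by the uniform bound $a^2|Z|/(a^2+Z^2)\le a/2$, followed by cancellation of the two $\phi$-terms via $e^{-ab}\phi(a-b)=e^{ab}\phi(a+b)$). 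What each approach buys: the paper's argument is shorter once one spots the Fourier-analytic interpretation but outsources both Plancherel and the final tilted-Laplace integral; yours is longer but entirely self-contained, requiring only Fubini, one classical Gaussian integral, and the fundamental theorem of calculus, at the cost of the one delicate substitution step, which you flag and handle correctly.
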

	\begin{proof}
		We denote by $\rho(x) := \frac{a}{2}\exp(-a|x|)$, $x\in\mathbb R$, the density of a 
		$\mathbb{R}$-valued, Laplace distributed, random variable $Y$ with parameter $a$. 
		Besides, $X$ denotes a random variable with values $\{b,-b\}$, independent of $Y$ such 
		that $P(X=b)=\frac{1}{2}$ holds. Then we get $\E[e^{iuY}] = \frac{a^2}{a^2+u^2}$ as 
		well as $\E[e^{iuX}] = \cos(ub)$ for $u\in\mathbb R$. In particular, $W:=X+Y$ satisfies 
		$f(u) := \E[e^{iuW}] = \frac{a^2\cos(ub)}{a^2+u^2}$ for all $u\in\mathbb R$ and the 
		density of $W$ is given by $\rho_W(x) = \frac 
		a4\left(\exp(-a|x-b|)+\exp(-a|x+b|)\right)$, $x\in\mathbb R$.
		%
		
		Let $\phi$ be the density function of the standard normal random variable $Z$ and for $u\in\mathbb R$ we denote its characteristic function by $g(u) := \exp(-u^2/2) = \E[e^ {iu Z}]$. Then Plancherel's theorem \cite[Theorem 2.2.14]{grafakos.08} yields
		\begin{align*}
			\E\left[\frac{a^2\cos(bZ)}{a^2+Z^2}\right] &= \frac{1}{\sqrt{2\pi}} 
			\int_{-\infty}^{\infty} f(u)g(u)du 
			= \sqrt{2\pi} \int_{-\infty}^{\infty} \rho_Z(x)\phi(x) dx \\
			&= a\sqrt{2\pi} e^{a^2/2}\left(\cosh(ab)-\frac12(e^{-ab}\Phi(a-b)+e^{ab}\Phi(a+b))\right).
		\end{align*}
		Moreover, we have
		\begin{align*}
			\E\left[\frac{Z\sin(bZ)}{a^2+Z^2}\right] &= -\frac{1}{a^2}\partial_b \E\left[\frac{a^2\cos(bZ)}{a^2+Z^2}\right] \\
			&= -\sqrt{2\pi} e^{a^2/2}\left(\sinh(ab)+\frac1{2}\left(e^{-ab}\Phi(a-b)-e^{ab}\Phi(a+b)\right)\right).
		\end{align*}
	\end{proof}

	The next statement is a consequence of the previous result.
	\begin{cor}\label{k:Integral}
		Let $t,C> 0$ and $x\geq 0$. Then we have
		$$ 2\int_0^\infty \left[f_{t,x}(z)-g_{t,x}(z)\right] dz = 
		\sqrt{\frac{2\pi}{t}}e^{-\frac{x^2}{2t}} + 2\pi C 
		e^{-Cx+tC^2/2}\left(\Phi(\sqrt{t}C-x/\sqrt{t})-1\right), $$
		where $f_{t,x}(z) := \frac{z^2\cos(xz)}{C^2+z^2}e^{-tz^2/2}$ and $g_{t,x}(z) := \frac{Cz\sin(xz)}{C^2+z^2}e^{-tz^2/2}$ for $z\in\mathbb R$.
	\end{cor}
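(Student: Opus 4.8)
The plan is to identify the left-hand side with a combination of the two Gaussian expectations evaluated in Lemma~\ref{l:normal expectation cos}, after a single change of variables. Concretely, I would set $a:=\sqrt t\,C$ and $b:=x/\sqrt t$; since $t,C>0$ and $x\ge 0$ we have $a>0$ and $b\ge 0$, which is exactly the regime covered by that lemma.

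First I would use that $z\mapsto f_{t,x}(z)$ and $z\mapsto g_{t,x}(z)$ are even in $z$, so that $2\int_0^\infty[f_{t,x}-g_{t,x}]=\int_{-\infty}^\infty[f_{t,x}-g_{t,x}]$, all integrals being absolutely convergent thanks to the Gaussian weight. Substituting $z=w/\sqrt t$ and simplifying the rational factors gives
\[
\int_{-\infty}^\infty f_{t,x}(z)\,dz=\frac{1}{\sqrt t}\int_{-\infty}^\infty\frac{w^2\cos(bw)}{a^2+w^2}e^{-w^2/2}\,dw,\qquad
\int_{-\infty}^\infty g_{t,x}(z)\,dz=C\int_{-\infty}^\infty\frac{w\sin(bw)}{a^2+w^2}e^{-w^2/2}\,dw,
\]
where in the second identity one uses $C=a/\sqrt t$. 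Writing each right-hand side as $\sqrt{2\pi}$ times an expectation against a standard normal variable $Z$, splitting $\frac{Z^2}{a^2+Z^2}=1-\frac{a^2}{a^2+Z^2}$ and using $\E[\cos(bZ)]=e^{-b^2/2}$, the left-hand side of the corollary becomes
\[
\frac{\sqrt{2\pi}}{\sqrt t}\Big(e^{-b^2/2}-\E\big[\tfrac{a^2\cos(bZ)}{a^2+Z^2}\big]\Big)-\sqrt{2\pi}\,C\,\E\big[\tfrac{Z\sin(bZ)}{a^2+Z^2}\big].
\]

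It then remains to insert the two closed forms from Lemma~\ref{l:normal expectation cos} and to simplify. Using $a/\sqrt t=C$, the two expectation contributions combine (with opposite overall signs coming from the two lines of the lemma) into $2\pi C e^{a^2/2}$ times the bracket $-\cosh(ab)+\sinh(ab)+e^{-ab}\Phi(a-b)$, in which the $\Phi(a+b)$ terms cancel and the two copies of $\tfrac12 e^{-ab}\Phi(a-b)$ add up; since $-\cosh(ab)+\sinh(ab)=-e^{-ab}$ this equals $2\pi C e^{a^2/2}e^{-ab}\big(\Phi(a-b)-1\big)$. Substituting back $a^2/2=tC^2/2$, $ab=Cx$ and $a-b=\sqrt t\,C-x/\sqrt t$ reproduces the second term of the claim, while the surviving term $\frac{\sqrt{2\pi}}{\sqrt t}e^{-b^2/2}$ equals $\sqrt{2\pi/t}\,e^{-x^2/(2t)}$, which is the first term. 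The only delicate point is to keep careful track of the scalar factors $1/\sqrt t$, $C$ and $\sqrt{2\pi}$ through the change of variables and to watch the sign pattern in the final collapse; there is no genuine analytic obstacle, since every integrand is dominated by the standard normal density times a bounded rational function.
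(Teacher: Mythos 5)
Your derivation is correct: the rescaling $z=w/\sqrt t$ with $a=\sqrt t\,C$, $b=x/\sqrt t$, the split $\tfrac{Z^2}{a^2+Z^2}=1-\tfrac{a^2}{a^2+Z^2}$ together with $\E[\cos(bZ)]=e^{-b^2/2}$, and the cancellation $-\cosh(ab)+\sinh(ab)=-e^{-ab}$ all check out and reproduce the stated identity exactly. The paper leaves this corollary as an unproved ``consequence'' of Lemma~\ref{l:normal expectation cos}, and your argument is precisely the intended route, just written out in full.
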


	Estimates for the integral are closely connected to estimates for the sum in which we are 
	actually interested. The error-term can be controlled by the Euler-Maclaurin formula. We 
	use the elementary estimate
	\begin{equation*}\label{equation__eq_1_elementary_estimate}
		F:=\left|\sum_{k=1}^Nf(k)-\int_0^Nf(y)dy\right| \leq \int_0^N |f'(y)|dy
	\end{equation*}
	which holds for any $C^1$-function. We first gather an inequality for the derivative of 
	the functions appearing in the preceding corollary.
	
	\begin{lem}\label{l:fprime inequality}
		Under the assumptions of Corollary \ref{k:Integral}, we have for all $z\geq0$
		$$ |f'_{t,x}(z)|+|g'_{t,x}(z)| \leq (4z+x^2zC^2+tz^3+3Cxz+tCxz^3) \frac{e^{-tz^2/2}}{C^2}.$$
	\end{lem}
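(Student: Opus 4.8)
The plan is to differentiate $f_{t,x}$ and $g_{t,x}$ explicitly, then bound each resulting term using the trivial inequalities $|\cos|\leq 1$, $|\sin|\leq 1$, $|\sin(xz)|\leq xz$ (for $z,x\geq 0$), and $\frac{z^2}{C^2+z^2}\leq 1$, $\frac{Cz}{C^2+z^2}\leq \frac{1}{2}$, together with the elementary observation that $\partial_z\bigl(\frac{z^2}{C^2+z^2}\bigr) = \frac{2C^2 z}{(C^2+z^2)^2}$ and $\partial_z\bigl(\frac{Cz}{C^2+z^2}\bigr) = \frac{C(C^2-z^2)}{(C^2+z^2)^2}$, both of which are controlled by $\frac{z}{C^2}$ in absolute value after crude estimates. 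Writing $f_{t,x}(z) = h_1(z)\cos(xz)e^{-tz^2/2}$ with $h_1(z) = \frac{z^2}{C^2+z^2}$ and $g_{t,x}(z) = h_2(z)\sin(xz)e^{-tz^2/2}$ with $h_2(z) = \frac{Cz}{C^2+z^2}$, the product rule gives three terms each, coming from differentiating $h_i$, the trigonometric factor, and the Gaussian factor $e^{-tz^2/2}$ (whose derivative contributes the factor $-tz$).

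First I would handle $f'_{t,x}$. Its three contributions are (i) $h_1'(z)\cos(xz)e^{-tz^2/2}$, bounded by $\frac{2C^2 z}{(C^2+z^2)^2}e^{-tz^2/2}\leq \frac{2z}{C^2}e^{-tz^2/2}$ after using $(C^2+z^2)^2\geq C^4$ — wait, more carefully $\frac{2C^2 z}{(C^2+z^2)^2}\leq \frac{2C^2 z}{C^2(C^2+z^2)}=\frac{2z}{C^2+z^2}\leq\frac{2z}{C^2}$; (ii) $-x z \sin(xz)\cdot\frac{1}{?}$ — here the factor is $h_1(z)\cdot(-x\sin(xz))e^{-tz^2/2}$, giving $|{\cdot}|\leq x\cdot 1\cdot 1\cdot e^{-tz^2/2}$; I want this in the stated form, so I note $x\leq \frac{x^2 zC^2 + \text{something}}{C^2}$ is not quite right — instead observe $x = \frac{C^2 x}{C^2}$ and this should be absorbed; actually the cleanest route is to not bound $x$ alone but keep track that the final bound allows a bare $x$ only multiplied by $z$, so I will use $|\sin(xz)|\le xz$ wherever a sine appears, turning the $(-x\sin(xz))$ term into $\le x^2 z\cdot\frac{z^2}{C^2+z^2}\cdot e^{-tz^2/2}\le x^2 z\, e^{-tz^2/2}\le \frac{x^2 z C^2}{C^2}e^{-tz^2/2}$; (iii) $h_1(z)\cos(xz)(-tz)e^{-tz^2/2}$, bounded by $tz\, e^{-tz^2/2} = \frac{tz^3 + \cdots}{C^2}$ — again I must be careful: $tz\le \frac{tz\cdot C^2}{C^2}$, but the target has $tz^3/C^2$, so here I should instead use $h_1(z)=\frac{z^2}{C^2+z^2}$ and only bound $e^{-tz^2/2}$ trivially and $|\cos|\le1$, getting $\frac{tz^3}{C^2+z^2}e^{-tz^2/2}\le \frac{tz^3}{C^2}e^{-tz^2/2}$. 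Summing the three gives $|f'_{t,x}(z)|\le (2z + x^2 z C^2\cdot C^{-2}\cdot C^2 + tz^3)\frac{e^{-tz^2/2}}{C^2}$; I will need to reconcile the bookkeeping so the coefficients match $4z + x^2zC^2 + tz^3$ exactly (the remaining $2z$ will come from $g'$).

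Then I would do the same for $g'_{t,x}$: (i) $h_2'(z)\sin(xz)e^{-tz^2/2}$ with $|h_2'(z)|=\frac{C|C^2-z^2|}{(C^2+z^2)^2}\le\frac{C(C^2+z^2)}{(C^2+z^2)^2}=\frac{C}{C^2+z^2}\le\frac1C$, and $|\sin(xz)|\le xz$, giving $\le \frac{xz}{C}e^{-tz^2/2}=\frac{Cxz}{C^2}e^{-tz^2/2}$; (ii) $h_2(z)\cdot x\cos(xz)e^{-tz^2/2}$ with $h_2(z)\le\frac12$, giving $\le \frac{x}{2}e^{-tz^2/2}$ — here I genuinely get a bare $x$; to fit the stated bound I must instead \emph{not} use $|\cos|\le 1$ but rather keep it as is and pair it with other terms, or more simply note this term is $\le \frac{Cz}{C^2+z^2}\cdot x\, e^{-tz^2/2}$ and since in the regime where this matters $z$ is bounded below... this is the one spot that needs care, so I will handle term (ii) of $g'$ by writing $\frac{Cz}{C^2+z^2}\le\frac{1}{2}$ and then $\frac{x}{2}\le\frac{3Cxz}{?}$ fails for small $z$; the correct fix is that one should bound $h_2(z)x\cos(xz)e^{-tz^2/2}$ using $h_2(z) = \frac{Cz}{C^2+z^2}\le \frac{Cz}{C^2}\cdot\frac{C^2}{C^2+z^2}\le \frac{z}{C}$, wait that gives $\frac{xz}{C}=\frac{Cxz}{C^2}$, which \emph{is} of the allowed form — so I was overcomplicating: use $h_2(z)\le z/C$ rather than $h_2(z)\le 1/2$. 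Thus term (ii) of $g'$ contributes $\le\frac{Cxz}{C^2}e^{-tz^2/2}$; (iii) $h_2(z)\sin(xz)(-tz)e^{-tz^2/2}$ with $h_2(z)\le\frac12$ and $|\sin(xz)|\le xz$, giving $\le \frac{tCxz^3}{2C^2}e^{-tz^2/2}$. Collecting all six contributions and adding, the bare-$z$ terms total $\frac{2z}{C^2}$ (from $f'$, term i) plus potentially another $z$-type term — I will arrange so they sum to $4z/C^2$; the $x^2zC^2$ term from $f'$ term (ii); the $tz^3$ from $f'$ term (iii); the $3Cxz$ assembled from the two $Cxz$-type contributions in $g'$ plus slack; and the $tCxz^3$ from $g'$ term (iii). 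The main obstacle is purely the coefficient bookkeeping: making the loose crude bounds line up to exactly the constants $4,1,1,3,1$ claimed, which may require using slightly less wasteful estimates on one or two terms (e.g. keeping $\frac{z^2}{C^2+z^2}$ or $\frac{Cz}{C^2+z^2}$ intact rather than bounding by $1$ or $\frac12$) — there is no conceptual difficulty, only the need to track where each power of $z$, $x$, $C$, $t$ originates so that nothing exceeds the stated template.
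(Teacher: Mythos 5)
Your approach is essentially the paper's: differentiate by the product rule and bound each term with $|\cos|\le 1$, $|\sin(xz)|\le xz$ and $C^2+z^2\ge C^2$ (the paper groups the factors as $h(z)/(z^2+C^2)\cdot e^{-tz^2/2}$ with $h(z)=z^2\cos(xz)$ resp.\ $Cz\sin(xz)$, but this is only cosmetic, and your bookkeeping in fact leaves slack against the stated constants). One step is wrong as written, though: in term (iii) of $g'_{t,x}$ you bound $h_2(z)\le \tfrac12$ and claim the contribution is $\le \tfrac{tCxz^3}{2C^2}$, but $\tfrac12\cdot xz\cdot tz = \tfrac{txz^2}{2}$, and $\tfrac{txz^2}{2}\le \tfrac{txz^3}{2C}$ fails for $z<C$. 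The fix is the same one you already found for term (ii): use $h_2(z)=\tfrac{Cz}{C^2+z^2}\le \tfrac{z}{C}$, which gives $\tfrac{z}{C}\cdot xz\cdot tz = \tfrac{tCxz^3}{C^2}$ and makes all contributions fit the template, with total $(2z+x^2zC^2+tz^3+2Cxz+tCxz^3)e^{-tz^2/2}/C^2$, comfortably below the claimed bound.
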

	\begin{proof}
		For a function $h:\mathbb R\rightarrow\mathbb R$ and $u(z) := \frac{h(z)}{z^2+C^2}e^{-tz^2/2}$ we have
		$$ u'(z) = \frac{h'(z)-tzh(z)}{z^2+C^2}e^{-tz^2/2} - 
		\frac{2zh(z)}{(C^2+z^2)^2}e^{-tz^2/2}$$
		as well as
		$$ |u'(z)| \leq (|h'(z)|+tz|h(z)|+2|h(z)|/z)\frac{e^{-tz^2/2}}{z^2+C^2}. $$
		If we choose $h(z) = z^2\cos(xz)$, we get $f_{x,t}(z) = \frac{h(z)}{z^2+C^2}e^{-tz^2/2}$. 
		Moreover, $|\cos(xz)|\leq 1$ and $|\sin(xz)|\leq xz$ ensure that
		$$ |f_{x,t}'(z)| \leq (2z+x^2z^3+tz^3+2z)\frac{e^{-tz^2/2}}{z^2+C^2}\leq 
		\frac1{C^2}(2z+x^2zC^2+tz^3+2z)e^{-tz^2/2} $$
		holds. Similarly, for $h(z) = Cz\sin(xz)$, we get $g_{x,t}(z) = 
		\frac{h(z)}{z^2+C^2}e^{-tz^2/2}$ as well as
		$$ |g_{x,t}'(z)| \leq (3Cxz+tCxz^3)\frac{e^{-tz^2/2}}{C^2}.$$
		Finally, we can conclude
		$$ |f_{x,t}'(z)|+|g_{x,t}'(z)| \leq (4z+x^2zC^2+tz^3+3Cxz+tCxz^3)\frac{e^{-tz^2/2}}{C^2}. $$
	\end{proof}

	\begin{cor}\label{k:Final}
		Let $C> 0$, $x\geq 0$, $t>0$, and define $f_{t,x},g_{t,x}$ as in Corollary 
		\ref{k:Integral}. Then
		\begin{align*}
			I-F \leq \frac{2}{l}\sum_{n=1}^{\infty} \left[f_{t,x}(n\pi/l)-g_{t,x}(n\pi/l)\right] \leq I+F
		\end{align*}    
		holds for any $l>0$, where 
		\begin{align*}
			I &:= \frac{2}{\sqrt{2\pi t}}e^{-\frac{x^2}{2t}} + 2C 
			e^{-Cx+tC^2/2}\left(\Phi(\sqrt{t}C-x/\sqrt{t})-1\right), \\
			F &:= \frac{2(3+xC)^2}{ltC^2}.
		\end{align*}  
	\end{cor}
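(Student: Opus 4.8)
The plan is to apply the elementary Euler--Maclaurin estimate (stated just before Lemma \ref{l:fprime inequality}) to the function $h:=f_{t,x}-g_{t,x}$ after rescaling the summation step to unit length, identify the resulting integral via Corollary \ref{k:Integral}, and control the error via Lemma \ref{l:fprime inequality} together with two Gaussian moment integrals.

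First I would set $\delta:=\pi/l$ and $g(k):=h(k\delta)$, so that $\frac{2}{l}\sum_{n=1}^{\infty}h(n\pi/l)=\frac{2\delta}{\pi}\sum_{n=1}^{\infty}g(n)$. Since $h$ and $h'$ decay like $e^{-tz^{2}/2}$, the function $g$ is $C^{1}$ with $g'(k)=\delta\,h'(k\delta)$, and $\sum_n g(n)$, $\int_0^\infty g$ and $\int_0^\infty|g'|$ all converge absolutely; applying the elementary estimate on $[0,N]$ and letting $N\to\infty$ gives
\begin{align*}
\left|\sum_{n=1}^{\infty}g(n)-\int_{0}^{\infty}g(k)\,dk\right|\leq\int_{0}^{\infty}|g'(k)|\,dk.
\end{align*}
Undoing the substitution $z=k\delta$ turns $\int_{0}^{\infty}g(k)\,dk$ into $\frac{1}{\delta}\int_{0}^{\infty}h(z)\,dz$ and $\int_{0}^{\infty}|g'(k)|\,dk$ into $\int_{0}^{\infty}|h'(z)|\,dz$; multiplying by $\frac{2\delta}{\pi}=\frac{2}{l}$ (using $\frac{2\delta}{\pi}\cdot\frac{1}{\delta}=\frac{2}{\pi}$) then yields
\begin{align*}
\left|\frac{2}{l}\sum_{n=1}^{\infty}h(n\pi/l)-\frac{2}{\pi}\int_{0}^{\infty}h(z)\,dz\right|\leq\frac{2}{l}\int_{0}^{\infty}|h'(z)|\,dz.
\end{align*}

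Next I would identify the main term: by Corollary \ref{k:Integral}, $\frac{2}{\pi}\int_{0}^{\infty}h(z)\,dz=\frac{1}{\pi}\sqrt{2\pi/t}\,e^{-x^{2}/(2t)}+2Ce^{-Cx+tC^{2}/2}(\Phi(\sqrt{t}C-x/\sqrt{t})-1)$, and since $\frac{1}{\pi}\sqrt{2\pi/t}=\sqrt{2/(\pi t)}=\frac{2}{\sqrt{2\pi t}}$ this equals exactly $I$. For the error term, $|h'|\leq|f'_{t,x}|+|g'_{t,x}|$ together with Lemma \ref{l:fprime inequality} bounds $\int_{0}^{\infty}|h'(z)|\,dz$ by $\frac{1}{C^{2}}\int_{0}^{\infty}(4z+x^{2}C^{2}z+tz^{3}+3Cxz+tCxz^{3})e^{-tz^{2}/2}\,dz$. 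Using $\int_{0}^{\infty}ze^{-tz^{2}/2}\,dz=1/t$ and $\int_{0}^{\infty}z^{3}e^{-tz^{2}/2}\,dz=2/t^{2}$, this integral equals $(6+5Cx+x^{2}C^{2})/(tC^{2})$, so $\frac{2}{l}\int_{0}^{\infty}|h'(z)|\,dz\leq\frac{2(6+5Cx+x^{2}C^{2})}{ltC^{2}}$. Finally the bound $\leq F=\frac{2(3+xC)^{2}}{ltC^{2}}$ follows from the elementary inequality $6+5Cx+x^{2}C^{2}\leq 9+6Cx+x^{2}C^{2}=(3+Cx)^{2}$, valid for all $C,x\geq 0$ since it reduces to $0\leq 3+Cx$. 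Combining the two displays gives $I-F\leq\frac{2}{l}\sum_{n=1}^{\infty}h(n\pi/l)\leq I+F$, which is the claim.

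I expect the only genuinely delicate points to be the bookkeeping in the change of variables $z=k\pi/l$ — so that the Euler--Maclaurin error constant comes out precisely as $\frac{2}{l}$ — and checking that the constant delivered by Corollary \ref{k:Integral} simplifies to exactly $I$ (the $\sqrt{2\pi}$ versus $\sqrt{2\pi t}$ rearrangement); the passage $N\to\infty$ and the Gaussian-moment computation are routine.
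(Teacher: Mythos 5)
Your proof is correct and follows essentially the same route as the paper: apply the elementary Euler--Maclaurin bound to the rescaled sum, identify the resulting integral with $I$ via Corollary \ref{k:Integral}, and control the error by Lemma \ref{l:fprime inequality} together with the Gaussian moments $\int_0^\infty z e^{-tz^2/2}\,dz = 1/t$ and $\int_0^\infty z^3 e^{-tz^2/2}\,dz = 2/t^2$. As a side remark, your intermediate value $\frac{2(6+5Cx+x^2C^2)}{ltC^2}$ is the correct evaluation of the error integral, whereas the paper's displayed intermediate quantity $\frac{14+12Cx+2x^2C^2}{ltC^2}$ contains a small arithmetic slip; both are dominated by $F=\frac{2(3+xC)^2}{ltC^2}$, so the stated conclusion is unaffected.
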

	\begin{proof}
		Due to Corollary \ref{k:Integral},
		$$ \frac{2}{l}\int_0^\infty \left[f_{t,x}(z\pi/l)-g_{t,x}(z\pi/l)\right] dz = I $$
		holds. Therefore, \eqref{equation__eq_1_elementary_estimate} and Lemma \ref{l:fprime 
			inequality} reveal 
		\begin{align*}
			&\left|\frac{2}{l}\sum_{n=1}^{\infty} \left[f_{t,x}(n\pi/l)-g_{t,x}(n\pi/l)\right] - 
			I\right| \leq \frac{2\pi}{l^2} 
			\int_0^\infty\left|f'_{t,x}(y\pi/l)-g'_{t,x}(y\pi/l)\right| dy \\
			&\qquad= \frac{2}{l} \int_0^\infty\left|f'_{t,x}(y)-g'_{t,x}(y)\right| dy \\
			&\qquad\leq\frac{2}{l} 
			\int_0^\infty(4z+x^2zC^2+tz^3+3Cxz+tCxz^3) \frac{e^{-tz^2/2}}{C^2} dz \\
			&\qquad= \frac{14+12Cx+2x^2C^2}{ltC^2} \\
			&\qquad\leq \frac{2(3+xC)^2}{ltC^2}.
		\end{align*} 
	\end{proof}

	Next, we revisit the Lebesgue differentiation theorem adapted to our situation. Basically, we start out with a random variable $X$, where existence of the density is unknown, but a suitable differential quotient is assumed to be locally bounded. Under this assumption, $X$ has Lebesgue density and a version of it is given by the aforementioned quotient.
	\begin{prop}\label{p:locmaxden}
		Let $X$ be an $\mathbb R^d$-valued random variable and define
		$$\rho(y):= \limsup_{\epsilon \searrow 0} \frac{P(X\in B_{\epsilon ,\infty}(y)) }{\vol(B_{\epsilon, \infty}(0))} \in [0,\infty].$$
		Assume that $\rho$ is locally bounded.
		
		Then $X$ has density and $\rho$ is a version of its density.
	\end{prop}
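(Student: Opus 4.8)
The plan is to deduce Proposition \ref{p:locmaxden} from the classical Lebesgue differentiation theorem, taking some care because a priori we do not know that $X$ has a density — so the object we want to differentiate is the finite Borel measure $\mu := P\circ X^{-1}$ on $\mathbb R^d$. First I would decompose $\mu$ via the Lebesgue decomposition as $\mu = \mu_{ac} + \mu_s$, where $\mu_{ac} = h\,dx$ with $h\in L^1(\mathbb R^d)$ and $\mu_s\perp \vol$. The Besicovitch (or, since we are using $\|\cdot\|$-cubes, the Lebesgue) differentiation theorem for measures gives that for $\vol$-a.e.\ $y$,
\[
\lim_{\epsilon\searrow 0}\frac{\mu(B_{\epsilon,\infty}(y))}{\vol(B_{\epsilon,\infty}(0))} = h(y),
\]
and moreover the symmetric derivative of $\mu_s$ with respect to $\vol$ vanishes $\vol$-a.e. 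Hence $\rho(y) = h(y)$ for $\vol$-a.e.\ $y$. In particular, since $\rho$ is everywhere finite (indeed locally bounded) and agrees a.e.\ with the honest density-candidate $h$, the only thing missing for the conclusion is to rule out the singular part $\mu_s$.

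The key step is therefore to show $\mu_s = 0$ under the hypothesis that $\rho$ is locally bounded. Here I would argue by contradiction: suppose $\mu_s \neq 0$. Then there is a bounded Borel set $A$ with $\mu_s(A) > 0$ and $\vol(A) = 0$ (take $A$ inside the $\mu_s$-carrier, intersected with a large cube). Using outer regularity of $\mu_s$ and the fact that the upper derivative of $\mu_s$ — equivalently of $\mu$ minus the a.e.-finite $h$-part — is infinite on a $\mu_s$-nonnull set (this is the standard converse direction in the theory of differentiation of measures), one finds a point $y_0$ in a fixed bounded open set where
\[
\limsup_{\epsilon\searrow 0}\frac{\mu(B_{\epsilon,\infty}(y_0))}{\vol(B_{\epsilon,\infty}(0))} = \infty,
\]
i.e.\ $\rho(y_0) = \infty$. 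This contradicts local boundedness of $\rho$, so $\mu_s = 0$, $\mu = h\,dx$, and $\rho = h$ a.e.\ is a version of the density. The passage from "$\rho$ bounded on a bounded set" to the contradiction can also be phrased more hands-on: if $\rho \le M$ on a cube $Q$, then for every Borel $B\subseteq Q$ one gets $\mu(B)\le M\,\vol(B)$ by a Vitali covering argument (cover $B$ up to $\vol$-null by small cubes on which $\mu$-mass is $\le (M+\delta)$ times volume, sum up, let $\delta\to 0$); this directly yields $\mu|_Q \ll \vol$ with bounded density $\le M$, hence no singular part inside $Q$, and exhausting $\mathbb R^d$ by cubes finishes the proof.

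The main obstacle I anticipate is purely a matter of which covering/differentiation theorem to invoke cleanly: one must make sure the differentiation theorem is applied to a \emph{measure} (not to an $L^1$ function, whose existence is what we are trying to establish) and that the shrinking family $B_{\epsilon,\infty}(y)$ of $\|\cdot\|$-cubes centred at $y$ is a legitimate differentiation basis — which it is, since these cubes are regular with respect to the balls $B_{\epsilon}(y)$ with a dimensional constant, so the Hardy–Littlewood maximal inequality and hence the Lebesgue differentiation theorem apply verbatim. Once that bookkeeping is settled, the Vitali-covering estimate $\mu(B)\le M\vol(B)$ for $B\subseteq Q$ is the substantive computation, and it is short. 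No other step requires more than routine measure theory.
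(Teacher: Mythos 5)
Your proof is correct, and its overall strategy coincides with the paper's: both reduce the statement to showing that the law $\mu=P\circ X^{-1}$ is absolutely continuous on any set where $\rho$ is bounded, and then identify $\rho$ with the Radon--Nikodym density via the Lebesgue differentiation theorem. The difference lies in how absolute continuity is established. You invoke the differentiation theory of Radon measures wholesale: Lebesgue decomposition $\mu=h\,dx+\mu_s$ together with the classical converse fact that a nonzero singular part has infinite upper symmetric derivative $\mu_s$-a.e., which contradicts local boundedness of $\rho$; your alternative, more hands-on route (a Vitali/Besicovitch covering argument giving $\mu(B)\le M\vol(B)$ for Borel $B$ inside a cube where $\rho\le M$) is closer in spirit to what the paper does. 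The paper instead localizes so that $\rho\le K$ everywhere, and then propagates the inequality $\nu(B)\le 2K\vol(B)$ from small cubes to closed cubes, to open sets (via an exhaustion by disjoint closed cubes, which is itself a covering-theorem step), to an algebra of rectangles, and finally to all Borel sets by a monotone class argument. What your packaging buys is brevity and reliance only on standard quoted theorems; what the paper's buys is a more self-contained set-theoretic extension that avoids naming the Besicovitch machinery, at the cost of the monotone-class bookkeeping. One point you rightly flag, and which matters in either version, is that the covering/differentiation statements must be applied to the measure $\mu$ (not to a putative $L^1$ density) and that the centred $\|\cdot\|$-cubes $B_{\epsilon,\infty}(y)$ form a regular differentiation basis; with that noted, both arguments are complete.
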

	\begin{proof}
		By localization, we may assume that $\rho$ is bounded by some $K\geq 0$. Define
		$$\mathcal M:=\{B\in\mathcal{B}:\nu(B)\leq 2K\vol(B)  \}$$
		where $\mathcal B$ is the Borel $\sigma$-algebra on $\mathbb R^d$.
		
		We show that $\mathcal M=\mathcal B$. Note that $\mathcal M$ is a monotone class in the sense of \cite[p.496, Theorem 4.1]{ethier.kurtz.86}. By assumption we find for any $y\in\mathbb R^d$, $\epsilon_y>0$ such that for any $\epsilon\in(0,\epsilon_y)$ we have $B_{\epsilon ,\infty}(y) \in \mathcal M$. If $A,B\in\mathcal M$ are disjoint, then $A\cup B \in \mathcal M$. The corresponding is true for countable families of disjoint elements in $\mathcal M$. Since $\mathcal M$ is a monotone class we also find $\overline{B_{\epsilon ,\infty}(y)} \in \mathcal M$. Any open set can be exhausted in measure (relative to $\nu$ and to $\vol$) by a countable disjoint union of such closed balls. Consequently, $\mathcal M$ contains all open sets. We find that $\mathcal M$ contains $\mathcal A_0 := \{ ([a_1,b_1)\times \dots \times [a_d,b_d)) \cap \mathbb R^d: a_1,\dots,a_d,b_1,\dots,b_d\in [-\infty,\infty]\}$. Since $\mathcal M$ is closed under disjoint union it also contains $\mathcal A:=\{ \bigcup_{j=1}^n C_j: n\in\mathbb N, C_1,\dots,C_n\in\mathcal A_0\text{ pairwise disjoint}\}$. Note that $\mathcal A$ is an algebra of sets. Observe that $\sigma(\mathcal{A})$ is the Borel $\sigma$-algebra $\mathcal B$. Consequently, the monotone class theorem \cite[p.496, Theorem 4.1]{ethier.kurtz.86} yields that 
		$$\mathcal{B}=\sigma(A)\subseteq M\subseteq \mathcal{B}.$$
		
		We find $\nu$ is absolutely continuous with respect to the Lebesgue measure. The Radon-Nikodym theorem \cite[p.422, Theorem 32.2]{billingsley2008probability} yields that $\nu$ has Lebesgue density $f$. The Lebesgue differentiation theorem \cite[p.87, Corollary 2.1.16]{grafakos.08} yields 
		$$f(y)=\lim_{\epsilon \searrow 0}\frac{\nu({B_{\epsilon, \infty}(y)})}{\vol(B_{\epsilon, \infty}(0))}=\rho(y)$$
		for Lebesgue almost any $y\in \mathbb R^d$. Hence, $\rho$ is a version of the density.
	\end{proof}

	Lipschitz-functions on $\mathbb R^d$ have some stability properties to carry over local bounded densities.
	\begin{lem}\label{l:Transfer of density}
		Let $Y$ be an $\mathbb R^d$ valued random variables, $F:\mathbb R^d\rightarrow\mathbb R^d$ be a locally Lipschitz-continuous function and assume that $F(Y)$ has locally bounded density.
		
		Then $Y$ has locally bounded density.
		
		Moreover, if $L$ is a global Lipschitz-constant for $F$ and $\rho$ a version of the density of $F(Y)$, then there is a version $\rho^Y$ of the density of $Y$ such that
		$$\rho^Y(y) \leq L^d \rho(F(y)), \quad y\in\mathbb R^d. $$
	\end{lem}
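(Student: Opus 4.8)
The plan is to derive the first assertion from Proposition \ref{p:locmaxden} by exhibiting a crude but \emph{everywhere}-valid bound on the upper differential quotient of $Y$, and then to obtain the quantitative estimate from the Lebesgue differentiation theorem, with one extra observation repairing the mismatch between ``almost every $y$'' and ``almost every $F(y)$''. Throughout I write $\rho^Y_{0}(y):=\limsup_{\epsilon\searrow0}P(Y\in B_{\epsilon,\infty}(y))/\vol(B_{\epsilon,\infty}(0))$ and note that if $u\in B_{\epsilon,\infty}(y)$ and $F$ has Lipschitz constant $L$ on a set containing $u,y$, then $F(B_{\epsilon,\infty}(y))\subseteq\overline{B_{L\epsilon,\infty}(F(y))}$, so that $P(Y\in B_{\epsilon,\infty}(y))\le P(F(Y)\in\overline{B_{L\epsilon,\infty}(F(y))})=\int_{\overline{B_{L\epsilon,\infty}(F(y))}}\rho$ for any version $\rho$ of the density of $F(Y)$; dividing by $\vol(B_{\epsilon,\infty}(0))=(2\epsilon)^d$ and using $\vol(B_{L\epsilon,\infty}(0))=L^d(2\epsilon)^d$ turns this into
$$\frac{P(Y\in B_{\epsilon,\infty}(y))}{\vol(B_{\epsilon,\infty}(0))}\le L^d\,\frac{1}{\vol(B_{L\epsilon,\infty}(0))}\int_{\overline{B_{L\epsilon,\infty}(F(y))}}\rho(u)\,du .$$

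\emph{Step 1 (local boundedness).} Fix $y_0\in\mathbb R^d$, put $K:=\overline{B_{r,\infty}(y_0)}$ for some $r>0$ and let $L_K\ge 1$ be a Lipschitz constant of $F$ on $K$ (continuity of $F$ makes $F(K)$ compact). Choosing a locally bounded version $\rho$ of the density of $F(Y)$ and, by compactness, an open $V\supseteq F(K)$ with $\sup_V\rho\le M<\infty$, one has for $y\in B_{r/2,\infty}(y_0)$ and all $\epsilon$ small enough (so that $B_{\epsilon,\infty}(y)\subseteq K$ and, using the positive distance of the compact set $F(K)$ to $V^c$, $\overline{B_{L_K\epsilon,\infty}(F(y))}\subseteq V$) that the displayed inequality gives $P(Y\in B_{\epsilon,\infty}(y))/\vol(B_{\epsilon,\infty}(0))\le L_K^d M$. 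Passing to $\limsup_{\epsilon\searrow0}$ shows $\rho^Y_0\le L_K^d M$ on $B_{r/2,\infty}(y_0)$, hence $\rho^Y_0$ is locally bounded, and Proposition \ref{p:locmaxden} yields that $Y$ has a density and that $\rho^Y_0$ is a locally bounded version of it.

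\emph{Step 2 (the pointwise bound).} Now let $L$ be a global Lipschitz constant for $F$ and $\rho$ any (w.l.o.g.\ Borel) version of the density of $F(Y)$, and write $q:=\rho^Y_0$. The displayed inequality holds for every $y$ and $\epsilon>0$; by the Lebesgue differentiation theorem \cite[p.\ 87, Corollary 2.1.16]{grafakos.08} there is a Borel null set $N\subseteq\mathbb R^d$ such that its right-hand side converges to $L^d\rho(z)$ as $\epsilon\searrow0$ whenever $z=F(y)\notin N$ (closed versus open cubes is immaterial since $\partial B_{\delta,\infty}(z)$ is Lebesgue null). Hence $q(y)\le L^d\rho(F(y))$ for every $y\notin F^{-1}(N)$. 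On the exceptional set $F^{-1}(N)$ I use that $\rho$ integrates to one and $|N|=0$, so $P(Y\in F^{-1}(N))=P(F(Y)\in N)=\int_N\rho=0$; since $q$ is a density of $Y$ this forces $q=0$ Lebesgue-a.e.\ on $F^{-1}(N)$. Combining, $q(y)\le L^d\rho(F(y))$ for Lebesgue-a.e.\ $y\in\mathbb R^d$. Finally set $\rho^Y(y):=\min\{q(y),\,L^d\rho(F(y))\}$: this agrees with $q$ a.e., hence is again a version of the density of $Y$; it is bounded by $q$, hence locally bounded (and one also notes $\rho\circ F$ is itself locally bounded, since $\rho$ is locally bounded and $F$ continuous); and by construction $\rho^Y(y)\le L^d\rho(F(y))$ for \emph{all} $y\in\mathbb R^d$.

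The covering/continuity estimates in Step 1 and the bookkeeping around $N$ are routine. The one genuinely delicate point is the passage, in Step 2, from a bound valid at almost every point of the range of $F$ to one valid at almost every point of the domain: a locally Lipschitz map need not send the null set $N$ to a null preimage $F^{-1}(N)$, so I cannot simply substitute. What rescues the argument is the fact that $F(Y)$ has a density, which makes $\{Y\in F^{-1}(N)\}$ a null event and thus pins $q$ to zero there — after which taking a pointwise minimum with $q$ upgrades the almost-everywhere inequality to the stated everywhere inequality on a suitable version.
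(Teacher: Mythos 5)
Your proof is correct and follows essentially the same route as the paper's: the inclusion $F(B_{\epsilon,\infty}(y))\subseteq B_{L\epsilon,\infty}(F(y))$, comparison of the two ball volumes, the Lebesgue differentiation theorem, and Proposition \ref{p:locmaxden}. Your Step 2 is in fact more careful than the paper's own argument, which passes from the averaged bound to the pointwise inequality $\rho^Y(z)\le L^d\rho(F(z))$ for \emph{all} $z$ without addressing the exceptional set $N$ of non-Lebesgue points of $\rho$ (whose preimage $F^{-1}(N)$ need not be Lebesgue-null); your repair via $P(Y\in F^{-1}(N))=P(F(Y)\in N)=0$ followed by taking the pointwise minimum with $L^d\,\rho\circ F$ closes that gap.
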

	\begin{proof}
		Let $\epsilon>0$ and $y\in\mathbb R^d$. Let $L$ be a Lipschitz-constant for $F$ on $B_{\epsilon}(y)$. We find that $F(B_{\epsilon,\infty}(y))\subseteq B_{L\epsilon,\infty}(F(y))$ and, hence, we have
		$$ \{Y\in B_{\epsilon,\infty}\} \subseteq \{ X(t) \in B_{L\epsilon,\infty}(F(y)) \}. $$
		Let $\rho$ be a locally bounded version of the density of $F(Y)$. Thus, we have. 
		\begin{align*}
			P(Y\in B_{\epsilon,\infty}(y)) &\leq P( F(Y) \in B_{L\epsilon,\infty}(F(y)) ) \\
			&= \int_{B_{L\epsilon,\infty}(F(y))} \rho(x) dx \\
			&= \int_{B_{\epsilon,\infty}(0)} \rho(Lx+F(y)) L^d dy
		\end{align*}     
		Consequently, we find from Lebesgue's differentiation theorem and the previous inequality that
		$$ \rho^Y(z) := \limsup_{\delta \searrow 0} \frac{P(Y\in B_{\delta,\infty}(z))}{\vol(B_{\delta,\infty}(z))} \leq L^d \rho(F(z)), \quad z\in B_{\epsilon,\infty}(y). $$
		Thus, $\rho^Y$ is locally bounded on $B_{\epsilon,\infty}(y)$. 
		
		Since we can make this construction for any $y\in \mathbb R^d$ we see that $\rho^Y$ is locally bounded.
		
		Proposition \ref{p:locmaxden} yields that $Y$ has density and $\rho^Y$ is a version of the density of $Y$. 
		
		Now assume that $L$ is a global Lipschitz-constant for $F$. Since $L$ can be chosen independently of the position we find that
		$$ \rho^Y(z) \leq L^d \rho(F(z)), \quad z\in\mathbb R^d. $$
	\end{proof}

	\bibliographystyle{alpha}
	
	\bibliography{main.bib}

\end{document}